\newtheorem{theorem}{Theorem}
\newtheorem{remark}[theorem]{Remark}
\newtheorem{proposition}[theorem]{Proposition}
\theoremstyle{definition}
\newtheorem{definition}[theorem]{Definition}
\numberwithin{equation}{section}
\numberwithin{figure}{section}
\numberwithin{theorem}{section}
\DeclareMathOperator*{\argmin}{arg\,min}
\DeclareMathOperator*{\essup}{ess\,sup}
\newcommand{\blau}{\color{black}}
\newcommand{\R}{\mathbb R}
\newcommand{\N}{\mathbb N}
\newcommand{\sph}{\mathbb S}
\newcommand{\edot}{\,\cdot\,}
\newcommand{\tfun}{\mathcal  T}
\newcommand{\rfun}{\mathcal R}
\newcommand{\ffun}{\mathcal F}
\newcommand{\gfun}{\mathcal G}
\newcommand{\Wo}{\mathbf W}
\newcommand{\Ho}{\mathbf H}
\newcommand{\Fo}{\mathbf T}
\newcommand{\Fqpat}{\mathbf F}
\newcommand{\Mo}{\mathbf M}
\newcommand{\Ao}{\mathbf A}
\newcommand{\Ko}{\mathbf K}
\newcommand{\Io}{\mathbf I}
\newcommand{\rmd}{\mathrm d}
\newcommand{\eps}{\epsilon}
\newcommand{\coloneqq}{:=}
\newcommand\abs[1]{\left\vert#1\right\vert}
\newcommand\sabs[1]{\lvert#1\rvert}
\newcommand\norm[1]{\left\Vert#1\right\Vert}
\newcommand\snorm[1]{\Vert#1\Vert}
\newcommand{\enorm}{\left\|\;\cdot\;\right\|}
\newcommand\set[1]{\left\{#1\right\}}
\newcommand\sphet[1]{\{#1\}}
\newcommand{\al}{\lambda}
\newcommand{\dom}{\mathcal D}
\newcommand{\diam}{\operatorname{diam}}
\newcommand{\dist}{\operatorname{dist}}
\newcommand{\prox}{\operatorname{prox}}
\DeclareMathOperator{\ran}{ran}
\newcommand{\kl}[1]{\left(#1\right)}
\newcommand{\skl}[1]{(#1)}
\newcommand\inner[2]{\left\langle#1,#2\right\rangle}
\newcommand{\Om}{\Omega}
\newcommand{\La}{\Lambda}
\newcommand{\sigdel}{h_\sigma}
\newcommand{\mudel}{h_\mu}
\newcommand{\Phiad}{\Phi^*}
\title{Single-stage reconstruction algorithm for quantitative\\photoacoustic tomography}
\author{Markus Haltmeier${}^\clubsuit$,
Lukas Neumann${}^\diamondsuit$
and Simon Rabanser${}^\clubsuit{}^\diamondsuit$}
\date{\normalsize
${}^\clubsuit$Department of Mathematics, University of Innsbruck\\[-0.1em]
\normalsize
Technikestra{\ss}e 13, A-6020 Innsbruck, Austria
\\[1em]
\normalsize
${}^\diamondsuit$Institute of Basic Sciences in Engineering Science, University of Innsbruck\\[-0.1em]
\normalsize
Technikestra{\ss}e 13, A-6020 Innsbruck, Austria
\\[1em]
\normalsize
E-mail: {\tt\{markus.haltmeier,lukas.neumann,simon.rabanser\}@uibk.ac.at}
}
\begin{document}

\maketitle

\begin{abstract}
The development of efficient and accurate image reconstruction algorithms is one of the cornerstones of computed tomography.
Existing  algorithms for quantitative photoacoustic tomography currently
operate in a two-stage procedure: First an inverse source problem for the acoustic wave
propagation is solved,  whereas in a second step the optical parameters are estimated from
the result of the first step.  Such an approach has several drawbacks.   In this paper we therefore
propose  the use of single-stage reconstruction algorithms for quantitative photoacoustic
tomography, where  the optical parameters are directly reconstructed from the observed acoustical data.
In that context we formulate the image reconstruction problem of quantitative photoacoustic tomography as a single nonlinear inverse problem
by coupling the radiative transfer equation with the acoustic wave equation. The inverse problem
is approached  by Tikhonov regularization with a convex penalty in combination with the proximal gradient iteration for minimizing the Tikhonov functional.
We present numerical results, where the  proposed single-stage algorithm shows an improved reconstruction quality at a similar computational cost.

\smallskip\noindent{\bf Keywords.}
Quantitative photoacoustic tomography,
stationary radiative transfer equation, wave equation, single-stage algorithm, inverse problem, parameter identification

\smallskip\noindent{\bf AMS classification numbers.}
44A12, 45Q05, 92C55.
\end{abstract}

\section{Introduction}
\label{sec:intro}

Photoacoustic tomography (PAT)  is a  recently developed
medical imaging paradigm that  combines the  high spatial resolution of ultrasound imaging with the high contrast of optical imaging \cite{Bea11,KruKopAisReiKruKis00,WanAna11,Wan09b,XuWan06}.
Suppose a semitransparent sample is illuminated with a short pulse of electromagnetic energy near the visible range.
Then parts of the optical energy will be absorbed inside the sample which causes a
rapid,  non-uniform increase of temperature. The increase of temperature yields a spatially varying thermoelastic expansion which in turn induces an acoustic pressure wave (see Figure~\ref{fig:pat}).
The induced acoustic pressure wave is measured outside of the  object of interest, and mathematical algorithms are used  to recover an image of the interior.

\begin{figure}[htb!]
\begin{center}
  \includegraphics[width=0.85\textwidth]{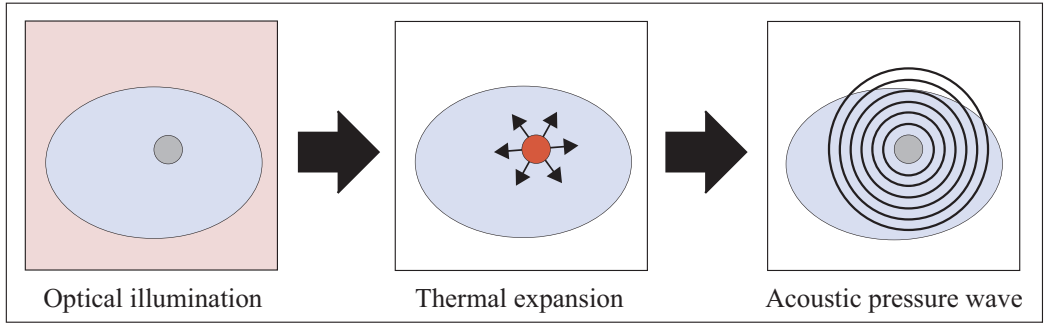}
\end{center}
\caption{\label{fig:pat}
\textsc{Basic principle of PAT.}  A semitransparent sample is illuminated with a  short optical pulse.
Due to optical absorption and subsequent thermal expansion within  the sample an acoustic pressure wave is induced. The acoustic pressure wave is measured outside of the sample and used to reconstruct an image of the interior.}
\end{figure}

Original (and also a lot of recent) work in PAT has been concentrated on the  problem of reconstructing  the initial pressure distribution, which has been considered as final image (see, for example, \cite{AgrKucKun09,BurBauGruHalPal07,FilKunSey14,FinHalRak07,FinPatRak04,FinRak07,Hal14,HalSchuSch05,KucKun08,Kow14,Kun07a,RosNtzRaz13,SteUhl09,XuWan06}). However, the recovered pressure distribution only provides indirect information about the investigated object.
This is due to the fact, that the initial pressure distribution is the product of the optical absorption coefficient
and the spatially varying optical intensity which again indirectly depends on the tissue parameters. As a consequence, the initial pressure distribution only provides qualitative information about the
tissue-relevant parameters.
Quantitative photoacoustic tomography (qPAT)  addresses exactly this issue and aims at quantitatively estimating the tissue parameters by supplementing the  wave inversion with an inverse problem for the light propagation in tissue (see, for example, \cite{AmmBosJugKang11,BalJolJug10,BalRen11,CheYa12,CoxArrBea07a,CoxArrKoeBea06,cox2012quantitative,DezDez13,KruLiuFanApp95,MamRen14,NatSch14,RenHaoZha13,RosRazNtz10,SarTarCoxArr,TarCoxKaiArr12,YaoSunHua10}).

To the best of our knowledge, {\blau apart from the very recent work \cite{Song:14},}
all existing  reconstruction algorithms for qPAT are currently performed via the following two-stage procedure:
First, the measured pressure values  are used to recover the initial pressure distribution caused by the thermal heating. In a second step, based on an  appropriate light propagation model, the spatially varying  tissue parameters are estimated from the initial pressure distribution recovered in the first step. However, any algorithm  for solving an inverse problem requires prior knowledge {\blau about} the  parameters to be recovered as well as partial knowledge  about the noise. If one solves qPAT  via a two-stage approach,
appropriate  prior information for the acoustic inverse problem is difficult to model, because the initial pressure depends on parameters not yet recovered.
This is particularly relevant for the case that the acoustic data can  only be measured on parts of the boundary (limited-angle scenario), in which case the acoustic inverse problems is known to be severely  ill-posed.  Further, using a two-stage approach, only limited information about the noise for the optical problem is available.

In view of such shortcomings  of  the two-stage approach,  in this paper we propose to recover
the optical parameters directly from the measured acoustical data via a single-stage procedure.
We work with the  stationary radiative transfer equation (RTE) as model for light propagation.
Our simulations show improved reconstruction quality of the proposed single-stage algorithm at a
computational cost similar to the one of {\blau existing  two-stage algorithms}.
Obviously our  single-stage strategy can alternatively  be combined with the diffusion approximation, which has also frequently been used in qPAT. {\blau In the present work} we use the stationary RTE since it is the more realistic model for light propagation in tissue.
{\blau In combination with the two-stage approach,
the RTE has previously been used for qPAT, for example, in \cite{BalJolJug10, DezDez13,TarCoxKaiArr12,SarTarCoxArr,MamRen14,YaoSunHua10}.}

\subsection{Mathematical modeling of qPAT}
\label{sec:pat-model}

Throughout this paper, let  $\Om \subset \R^d$ denote a convex bounded domain  with Lipschitz-boundary $\partial\Om$, where $d \in \set{2,3}$ denotes the spatial dimension.
We model the optical radiation by a function $\Phi \colon  \Om \times \sph^{d-1} \to \R$, where
 $\Phi \kl{x, \theta}$ is the density of photons at location $x \in \Om$ propagating in direction $\theta \in \sph^{d-1}$.
The photon density is supposed to satisfy the {\blau RTE, which reads}
\begin{multline}\label{eq:srt}
    \theta \cdot \nabla_x  \Phi\kl{x, \theta}
    + \kl{\sigma\kl{x} +\mu\kl{x}}  \Phi\kl{x, \theta}
    \\ =
    \sigma\kl{x} \int_{\mathbb \sph^{d-1}}  k \kl{\theta, \theta'} \Phi(x, \theta') \rmd \theta'
    + q(x,\theta)  \quad \text{ for } \kl{x,\theta} \in \Om \times \sph^{d-1} \,.
\end{multline}
Here $\sigma\kl{x} $ is the scattering coefficient,
$\mu\kl{x}$ is the absorption coefficient, and $q\kl{x, \theta}$ is the photon source density.
The scattering kernel $k \kl{\theta, \theta'}$ describes the redistribution of velocity directions of scattered photons
due to interaction with the background.
The stationary RTE \eqref{eq:srt} is commonly considered as a very accurate model for light transport in tissue (see, for example, \cite{Arr99,DaLiVol6,EggSch14c,Kan10}).

In order to obtain a well-posed problem one has to impose  appropriate boundary conditions. For that purpose it is convenient to split the boundary
$ \Gamma \coloneqq \partial \Om \times \sph^{d-1}$  into  inflow and  outflow boundaries,
\begin{align*}
\Gamma_-& \coloneqq
\left\{(x,\theta)\in\partial\Om\times \sph^{d-1}\colon \nu(x)\cdot\theta<0\right\} \,,
\\
\Gamma_+&\coloneqq
\left\{(x,\theta)\in\partial\Om\times
\sph^{d-1}
\colon \nu(x)\cdot\theta>0\right\} \,,
\end{align*}
with $\nu(x)$ denoting the outward pointing  unit normal at $x \in \partial \Om$.
We then augment \eqref{eq:srt} by  the inflow boundary conditions
\begin{equation}\label{eq:srt-b}
	\Phi|_{\Gamma_-}
	=  f
	\quad \text{ for some  }
	f \colon \Gamma_-  \to \R \,.
\end{equation}
Under physically reasonable assumptions it can be shown that the stationary RTE \eqref{eq:srt} together with the inflow boundary conditions \eqref{eq:srt-b} is a
well-posed problem. In Section~\ref{sec:transport} we apply a  recent result of  \cite{EggSch14} that guarantees the well-posedness of \eqref{eq:srt}, \eqref{eq:srt-b}
even in the presence of voids (parts of the domain under consideration, where $\mu$ and $\sigma$ vanish).

The absorption of photons causes a non-uniform heating of the tissue proportional
to the total amount of absorbed photons,
\begin{equation*}
   h \kl{x}
    \coloneqq
   \mu(x) \int_{\sph^{d-1}} \Phi(x, \theta) \rmd \theta
    \quad \text{ for } x \in \Om \,.
\end{equation*}
The heating in turn induces an
acoustic pressure wave $p \colon \R^d \times \kl{0, \infty} \to \R$.
The initial  pressure distribution is given by $p(\edot, 0) = {\blau \gamma} h$, where
${\blau \gamma}$ is  the Gr\"uneissen  parameter describing the efficiency of  conversion of heat into
acoustic pressure.
For the sake of simplicity we consider the Gr\"uneissen  parameter to be constant,
known and rescaled to one.
We further assume the speed of sound to be constant and also rescaled to one.
The photoacoustic pressure then satisfies the following initial value problem
for the standard wave equation,
\begin{equation}  \label{eq:wave-ini}
	\left\{ \begin{aligned}
	 \partial_t^2 p (x,t) - \Delta p(x,t)
	&=
	0 \,,
	 && \text{ for }
	\kl{x,t} \in
	\R^d \times \kl{0, \infty}
	\\
	p\kl{x,0}
	&=
	h(x) \,,
	&& \text{ for }
	x  \in \R^d
	\\
	\partial_t
	p\kl{x,0}
	&=0 \,,
	&& \text{ for }
	x  \in \R^d \,.
\end{aligned} \right.
\end{equation}
The goal of qPAT is to reconstruct the parameters $\mu$ and $\sigma$ from measurements of the acoustic pressure $p$ outside $\Omega$. {\blau Pressure measurements are usually taken  as a function of time on parts of the boundary $\partial \Omega$.}

\subsection{The inverse problem of qPAT}
\label{sec:pat-recon}

In  the following we assume that acoustic measurements are available for multiple optical source distributions  (illuminations). For that purpose, let  $\kl{q_i, f_i}$ for $i=1, \dots N$ be given pairs of source patterns
and {\blau boundary light sources}. We use  $\Fo_i$ to denote the operator that takes the  pair $\kl{ \mu, \sigma}$
to the solution of the stationary RTE  \eqref{eq:srt}, \eqref{eq:srt-b} with $q_i$ and $f_i$ in place of
$q$ and $f$, and denote by
\begin{equation*}
    \Ho_i \skl{ \mu, \sigma} \kl{x}
    \coloneqq
     \mu(x) \int_{\sph^{d-1}}
     \Fo_i \skl{ \mu, \sigma}(x, \theta) \rmd \theta
     \qquad \text{ for }
     x \in \Om  \,
\end{equation*}
the operator describing the corresponding thermal heating.
Further, we write $\Wo_{\Om, \La}$ for the operator that maps the  initial data
$h$ to the solution $\Wo_{\Om,\La} h  :=  p|_{\partial\Om \times (0, \infty) }$
of  the wave equation \eqref{eq:wave-ini} restricted to the boundary $\partial\Om$.
Appropriate functional analytic frameworks for $\Fo_i$, $\Ho_i$ and $\Wo_{\Om,\La}$
will be given in Section~\ref{sec:forward}, where we also study
 properties of these mapping.

The reconstruction problem of qPAT with multiple illuminations can be
written in the form of a nonlinear inverse problem,
\begin{equation} \label{eq:ip}
 	v_i = \kl{ \Wo_{\Om,\La} \circ \Ho_i } \skl{ \mu^\star, \sigma^\star} + z_i  \quad \text{ for } i = 1,\dots, N \,.
\end{equation}
Here  $v_i$ are the measured noisy  data, the operators $\Wo_{\Om,\La} \circ \Ho_i$ model the forward problem of qPAT, $z_i$ are the noise in the data, and $\mu^\star, \sigma^\star$ are the true parameters.
The aim is to estimate the parameter pair $\kl{ \mu^\star, \sigma^\star}$  from given data $v_i$, and hence solving the  inverse problem \eqref{eq:ip}.

\subsection{Outline of the paper}
\label{sec:outline}

In this paper we address the inverse problem \eqref{eq:ip} by Tikhonov regularization,
\begin{equation*}
	\frac{1}{2}
	 \sum_{i=1}^N
	 \norm{\kl{ \Wo_{\Om,\La} \circ \Ho_i }
	 \skl{ \mu, \sigma} - v_i}^2
	+ \lambda \rfun ( \mu, \sigma)
	\to \min_{(\mu, \sigma)} \,,
\end{equation*}
where $\rfun$ is a convex penalty and $\lambda > 0$ is the regularization parameter. We  show that Tikhonov regularization applied to single-stage qPAT
is well-posed and convergent; see Theorem~\ref{thm:tik}.
For that purpose we derive regularity results for the heating operators $\Ho_i$ in Section \ref{sec:forward}. To establish such properties we use  results for the stationary RTE
derived recently in  \cite{EggSch14}.

For numerically minimizing the Tikhonov functional we apply the proximal gradient algorithm (also named forward backward splitting); see Section~\ref{sec:min}. The  proximal gradient algorithm, is an iterative scheme for minimizing  functionals that can be written as the  sum of a smooth and a convex part \cite{ComWaj05,CombPes11}. For the classical two-stage approach in qPAT in combination with the diffusion approximation, the proximal gradient algorithm has recently been applied in
\cite{ZhaZhoZhaGao14}. Numerical results using the proximal gradient algorithm applied to our single-stage approach are presented in Section~\ref{sec:num},
where we also include a comparison with the two-stage approach.
Of course, our single-stage approach can be combined with classical gradient or Newton-type  schemes. The proximal gradient algorithm is our method of choice,
since its is very flexible and fast, and can be applied for a large class of smooth or non-smooth penalties.

\section{Analysis of the direct problem of qPAT}
\label{sec:forward}

Before actually studying  the inverse problem of qPAT we first make sure that the forward problem
is well-posed in suitable spaces and that the data depend continuously on the parameters we intend to reconstruct.
For that purpose we review a recent existence and uniqueness result for the stationary RTE allowing for voids
in the domain of interest \cite{EggSch14}.
The use of a-priori  estimates will lead to differentiability results for the operators $\Fo_i$ and $\Ho_i$.

\subsection{The stationary RTE}
\label{sec:transport}

The stationary RTE has been studied in various contexts.
The most prominent, apart from the transport of radiation in a scattering media, is reactor physics, where the equation is used in the group velocity approximation of the neutron transport problem.
An extensive collection of results regarding applications as well as existence and uniqueness of solutions can be found in \cite{DaLiVol6}.
The analysis of the RTE becomes considerably more involved if internal voids, i.e. regions where scattering and absorption coefficient become zero, are allowed.

Suppose $1 \leq p \leq \infty$,
and denote by $L^p\skl{\Gamma_-, \sabs{\nu \cdot \theta}}$ the space of all measurable functions $f$ defined on $\Gamma_-$  for which
\begin{equation*}
\norm{f}_{L^p\skl{\Gamma_-, \sabs{\nu \cdot \theta}}}
 \coloneqq
  \begin{cases}
  \sqrt[p]{\int_{\Gamma_-}   \abs{\nu(x) \cdot  \theta} \abs{f(x,\theta)}^p \rmd (x,\theta)}
  & \text{if $p < \infty$}
  \\
  \essup_{(x,\theta) \in \Gamma_-} \set{\abs{\nu(x) \cdot  \theta} \abs{f(x,\theta)} }
  & \text{if  $p  =  \infty$}
  \end{cases}
  \end{equation*}
is finite. We write $W^p(\Om\times \sph^{d-1})$ for the space of all measurable functions defined on
$\Om\times \sph^{d-1}$  such that
\begin{equation*}
\norm{\Phi}_{W^p\skl{\Om \times \sph^{d-1}}}^p
\coloneqq
\norm{\Phi}_{L^p\skl{\Om \times \sph^{d-1}}}^p+
\norm{\theta \cdot \nabla_x \Phi}_{L^p\skl{\Om \times \sph^{d-1}}}^p +
\norm{\Phi|_{\Gamma_-}}_{L^p\kl{\Gamma_-,
\abs{\nu \cdot \theta}}}^p
\end{equation*}
is well defined and finite (with the usual modification for $p=\infty$).
The subspace of
all $\Phi \in W^p(\Om\times \sph^{d-1})$  with $\Phi|_{\Gamma_-}=0$ will be denoted by
 $W^p_0(\Om\times \sph^{d-1})$.
Further, for a given scattering kernel $k \in L^\infty\skl{\sph^{d-1} \times \sph^{d-1}}$ we write
$\Ko \colon L^p\skl{\Om \times \sph^{d-1}}  \to L^p\skl{\Om \times \sph^{d-1}}$ for the corresponding scattering
operator,
\begin{equation*}
\kl{\Ko \Phi} \kl{x, \theta} =
\int_{\sph^{d-1}}  k(\theta, \theta' ) \Phi(x, \theta') \rmd \theta'  \qquad \text{ for }
(x, \theta) \in \Om \times \sph^{d-1} \,.
\end{equation*}
Throughout this article, the scattering kernel $k$  is supposed to be symmetric and nonnegative, and to satisfy
$\int_{\sph^{d-1}}  k \kl{\theta, \theta'} \rmd \theta' = 1$ for all $\theta \in \sph^{d-1}$. This reflects the fact that
$k\kl{\edot, \theta'}$ is a probability distribution describing the redistributions of velocity directions due to interaction of the photons  with the background. Under these assumption, the scattering  operator $\Ko$ is easily seen to be linear and bounded.

Using the notation just introduced, the stationary RTE  \eqref{eq:srt}, \eqref{eq:srt-b} can be written in the compact form
\begin{equation}\label{eq:strong}
\left\{
\begin{aligned}
\kl{ \theta \cdot \nabla_x
+ \kl{\mu   + \sigma - \sigma \Ko} }
\Phi
&=
q
&& \text{ in } \Om \times \sph^{d-1}\\
\Phi|_{\Gamma_-}
&=
f
&& \text{ on } \Gamma_-\,.
\end{aligned}
\right.
\end{equation}
By definition, a solution of the stationary RTE \eqref{eq:srt}, \eqref{eq:srt-b} in $W^p$ is any function $\Phi \in W^p\skl{\Om \times \sph^{d-1}}$ satisfying \eqref{eq:strong}.
The following theorem, which has been derived very recently in    \cite{EggSch14b}, states that  under physically reasonable  assumptions there exists exactly one such solution, that further continuously depends on  the {\blau source pattern} and the {\blau boundary light source}.

\begin{theorem}[Existence and uniqueness of solutions in $W^p$]  \label{thm:exist}
Let  $\overline\mu, \overline\sigma$ denote positive constants, let $\mu,\sigma$ be measurable functions satisfying  $0 \leq \mu \leq \overline\mu$ and $0 \leq \sigma \leq \overline\sigma$, and let $1\leq p\leq\infty$.
Then, for any source pattern $q\in L^p\skl{\Om\times \sph^{d-1}}$ and
 {\blau any boundary light source} $f \in L^p\kl{\Gamma_-, \abs{\nu \cdot \theta}}$, the stationary RTE \eqref{eq:srt}, \eqref{eq:srt-b} admits a unique solution  $\Phi\in W^p(\Om\times \sph^{d-1})$.
Moreover, there exists a
constant $C_p(\overline\mu, \overline\sigma)$ only depending on $p$, $\overline\mu$ and $\overline\sigma$, such that the following a-priori estimate holds
\begin{equation} \label{eq:apriori}
 \norm{\Phi}_{W^p\skl{\Om \times \sph^{d-1}}} 
\leq
 C_p(\overline\mu, \overline\sigma) \kl{    \norm{q}_{L^p\skl{\Om\times \sph^{d-1}}} + \norm{f}_{L^p\kl{\Gamma_-, \abs{\nu \cdot \theta}}} }\,.
\end{equation}
\end{theorem}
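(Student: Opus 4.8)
The plan is to invert the equation in two stages, each of which stays uniform as $\mu$ and $\sigma$ degenerate, so that voids cause no trouble. Throughout I write $\tfun_0 \coloneqq \theta\cdot\nabla_x + \mu + \sigma$ for the collision-free transport operator on $W^p_0(\Om\times\sph^{d-1})$. First I would dispose of the inhomogeneous inflow data: along the line $s\mapsto x-s\theta$ the homogeneous streaming equation $\tfun_0\Phi_f=0$, $\Phi_f|_{\Gamma_-}=f$, is an ordinary differential equation solved by transporting $f$ from the inflow point with the attenuation factor $\exp\kl{-\int_0^{\tau_-(x,\theta)}(\mu+\sigma)(x-r\theta)\,\rmd r}$, where $\tau_-(x,\theta)\le\diam(\Om)$ is the backward exit length. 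Since the attenuation factor lies in $(0,1]$, a change of variables from $\Om\times\sph^{d-1}$ to boundary coordinates on $\Gamma_-$ (whose Jacobian produces exactly the weight $\abs{\nu\cdot\theta}$) gives $\Phi_f\in W^p$ with $\norm{\Phi_f}_{W^p}\le C\,\norm{f}_{L^p(\Gamma_-,\abs{\nu\cdot\theta})}$, the constant independent of $\mu,\sigma$. Setting $\Phi=\Phi_f+\Psi$ turns \eqref{eq:strong} into the problem of finding $\Psi\in W^p_0$ with $\tfun_0\Psi-\sigma\Ko\Psi=q+\sigma\Ko\Phi_f$. The same characteristic integration inverts $\tfun_0$ itself: $\skl{\tfun_0^{-1}g}(x,\theta)=\int_0^{\tau_-(x,\theta)}g(x-s\theta,\theta)\exp\kl{-\int_0^s(\mu+\sigma)(x-r\theta)\,\rmd r}\,\rmd s$, and because the exponential is at most one and $\tau_-\le\diam(\Om)$, Minkowski's integral inequality yields $\norm{\tfun_0^{-1}}_{L^p\to L^p}\le\diam(\Om)$ with no dependence on lower bounds for $\mu+\sigma$. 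This is exactly the point at which voids are accommodated.

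The reduced equation is the fixed-point identity $\kl{I-B}\Psi=\tfun_0^{-1}\kl{q+\sigma\Ko\Phi_f}$ in $L^p$, with $B\coloneqq\tfun_0^{-1}\sigma\Ko$, and here lies the main obstacle: the crude bound $\norm{B}\le\diam(\Om)\,\overline\sigma$ may exceed one, so a bare Neumann series need not converge when $\sigma$ is large. To overcome this I would exploit the conservative structure of the scattering. Since $\Ko\mathbf 1=\mathbf 1$ (because $\int_{\sph^{d-1}}k(\theta,\theta')\,\rmd\theta'=1$), one has $\skl{B\mathbf 1}(x,\theta)=\int_0^{\tau_-(x,\theta)}\sigma(x-s\theta)\exp\kl{-\int_0^s(\mu+\sigma)(x-r\theta)\,\rmd r}\,\rmd s$, and bounding $\sigma\le\mu+\sigma$ in the integrand gives $\skl{B\mathbf 1}(x,\theta)\le 1-\exp\kl{-\int_0^{\tau_-}(\mu+\sigma)}\le 1-e^{-(\overline\mu+\overline\sigma)\diam(\Om)}=:q_0<1$. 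The same pointwise estimate holds for the adjoint, using the symmetry of $k$ and again $\int_{\sph^{d-1}}k(\theta,\theta')\,\rmd\theta'=1$. As $B$ is a positive operator, the bounds $B\mathbf 1\le q_0\mathbf 1$ and $B^*\mathbf 1\le q_0\mathbf 1$ say precisely that $\norm{B}_{L^\infty\to L^\infty}\le q_0$ and $\norm{B}_{L^1\to L^1}\le q_0$; by Riesz--Thorin interpolation $\norm{B}_{L^p\to L^p}\le q_0<1$ for every $1\le p\le\infty$. Hence $I-B$ is boundedly invertible with $\norm{(I-B)^{-1}}\le(1-q_0)^{-1}$, which produces the unique $\Psi$ and hence uniqueness for $\Phi$ (the classical sub-stochastic mechanism behind this is treated in \cite{DaLiVol6}). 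The point is that $q_0$ depends only on $\overline\mu,\overline\sigma$ and $\diam(\Om)$, so the argument never needs a positive lower bound on the coefficients.

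It remains to assemble \eqref{eq:apriori}. Combining the three bounds gives $\norm{\Psi}_{L^p}\le(1-q_0)^{-1}\diam(\Om)\kl{\norm{q}_{L^p}+\overline\sigma\norm{\Phi_f}_{L^p}}$, which together with the estimate for $\Phi_f$ controls $\norm{\Phi}_{L^p}$ by a constant multiple of $\norm{q}_{L^p}+\norm{f}_{L^p(\Gamma_-,\abs{\nu\cdot\theta})}$. The remaining two pieces of the $W^p$ norm follow at once: the inflow trace satisfies $\norm{\Phi|_{\Gamma_-}}_{L^p(\Gamma_-,\abs{\nu\cdot\theta})}=\norm{f}_{L^p(\Gamma_-,\abs{\nu\cdot\theta})}$, while reading the streaming derivative off the equation as $\theta\cdot\nabla_x\Phi=q-(\mu+\sigma)\Phi+\sigma\Ko\Phi$ bounds $\norm{\theta\cdot\nabla_x\Phi}_{L^p}$ by $\norm{q}_{L^p}+(\overline\mu+2\overline\sigma)\norm{\Phi}_{L^p}$. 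Summing the three contributions yields \eqref{eq:apriori} with a constant $C_p(\overline\mu,\overline\sigma)$ of the advertised form. The endpoint $p=\infty$ I would obtain either by passing to the limit $p\to\infty$ in these estimates or, more directly, from the maximum-principle character of the characteristic representation of $\tfun_0^{-1}$. I expect the delicate point throughout to be the scattering step, namely establishing the uniform contraction $\norm{B}_{L^p\to L^p}\le q_0<1$ without any positive lower bound on the coefficients; this is exactly what the leakage estimate $B\mathbf 1\le q_0\mathbf 1$ accomplishes.
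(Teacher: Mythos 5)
The first thing to note is that the paper itself does not prove Theorem~\ref{thm:exist}: its proof is the single line ``See \cite{EggSch14b}'', precisely because well-posedness in $W^p$ for all $1\le p\le\infty$ in the presence of voids is a nontrivial result with a dedicated paper behind it. Your attempt at a self-contained argument follows the classical skeleton (lift the inflow datum $f$ along characteristics, invert the collision-free operator $\tfun_0=\theta\cdot\nabla_x+\mu+\sigma$ by the attenuated ray integral, treat scattering perturbatively), and several pieces are correct: the lifting estimate for $\Phi_f$, the bound $\norm{\tfun_0^{-1}}_{L^p\to L^p}\le\diam(\Om)$, the trace and streaming-derivative bookkeeping, and in particular the $L^\infty$ bound $B\mathbf 1\le q_0\mathbf 1$ with $q_0=1-e^{-(\overline\mu+\overline\sigma)\diam(\Om)}$, which is valid because there the scattering coefficient and the attenuation are evaluated along the \emph{same} ray and the integrand telescopes. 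This correctly settles the case $p=\infty$.

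The gap is the sentence ``The same pointwise estimate holds for the adjoint.'' It does not, and this is exactly where voids bite. Setting $w(y,\theta)\coloneqq\int_0^{\tau_+(y,\theta)}\exp\kl{-\int_0^s(\mu+\sigma)(y+r\theta)\,\rmd r}\rmd s$, the adjoint of $B=\tfun_0^{-1}\sigma\Ko$ satisfies
\begin{equation*}
\skl{B^*\mathbf 1}(y,\theta')=\sigma(y)\int_{\sph^{d-1}}k(\theta,\theta')\,w(y,\theta)\,\rmd\theta\,,
\end{equation*}
where now the factor $\sigma(y)$ sits at the foot of the ray while the attenuation inside $w$ accumulates \emph{downstream}, so nothing telescopes. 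Concretely, take $\Om$ the unit ball, $\mu\equiv 0$, and $\sigma=\overline\sigma$ on a ball of radius $\eps$ about the origin and zero elsewhere: then $w(y,\theta)\ge e^{-2\overline\sigma\eps}\,\tau_+(y,\theta)\approx 1$ for $y$ near the origin, hence $\skl{B^*\mathbf 1}(y,\theta')\gtrsim\overline\sigma e^{-2\overline\sigma\eps}$, which exceeds $1$ (let alone $q_0$) once $\overline\sigma>1$ and $\eps$ is small. So $\norm{B}_{L^1\to L^1}$ is not even bounded by $1$, and Riesz--Thorin only yields $\norm{B}_{L^p\to L^p}\le q_0^{1-1/p}\kl{\overline\sigma\diam(\Om)}^{1/p}$, a contraction for sufficiently large $p$ but never for $p$ near $1$; since your existence \emph{and} uniqueness both hinge on invertibility of $I-B$ via the Neumann series, the theorem remains unproved for small $p$. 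The failure is structural, not a slip in constants: integrating the equation over $\Om\times\sph^{d-1}$ shows the natural $L^1$ balance controls only $\int\mu\Phi$ plus the outflow, so with $\mu\equiv 0$ there is no elementary $L^1$ a-priori bound. (Swapping the order, $B'=\sigma\Ko\tfun_0^{-1}$, gives the telescoping bound on $L^1$ but loses it on $L^\infty$, so one gets contraction for $p$ near $1$ and near $\infty$ yet still not in the middle range when $\overline\sigma\diam(\Om)$ is large.) Closing this gap --- uniqueness and the uniform estimate \eqref{eq:apriori} for all $1\le p\le\infty$ with possibly vanishing coefficients --- is the actual content of \cite{EggSch14b}, and is why the paper defers to that reference rather than giving an argument of the kind you propose.
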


\begin{proof}
See \cite{EggSch14b}.
\end{proof}

\subsection{The parameter-to-solution operator $\Fo$ for the stationary RTE}
\label{sec:F}

Throughout this subsection, let $1 \leq p \leq \infty$,  and let $q\in L^\infty\skl{\Om\times \sph^{d-1}}$ and $f \in L^\infty\kl{\Gamma_-, \abs{\nu \cdot \theta}}$ be given source pattern and {\blau boundary light source}, respectively. Further, for fixed  positive numbers
$\overline\mu, \overline\sigma>0$ we denote
\begin{equation} \label{eq:dp}
\dom_p  \coloneqq \set{ (\mu, \sigma) \in  L^p \kl{\Om} \times L^p \skl{\Om \times \sph^{d-1} }:  0 \leq \sigma \leq  \overline\sigma
\text{ and } 0 \leq \mu \leq  \overline\mu }  \,.
\end{equation}
Then $\dom_p$ is a closed, bounded and convex subset of  $L^p \kl{\Om} \times L^p \kl{\Om \times \sph^{d-1} }$, that has empty interior in the case that $p < \infty$.

\begin{definition}[Parameter-to-solution operator for the stationary RTE]
The parameter-to-solution operator for the stationary RTE is defined by
\begin{align}\label{eq:operator}
&\Fo \colon \dom_p\to W^p(\Om \times \sph^{d-1}) \colon
\kl{\mu,\sigma}
\mapsto  \Phi \,,
\end{align}
where $\Phi$ denotes the unique solution of \eqref{eq:srt}, \eqref{eq:srt-b}.
\end{definition}

According to Theorem \ref{thm:exist} the operator  $\Fo$ is well defined.
Note further, that $\Fo$ depends  on  $p$, $q$, $f$, $\overline\mu$ and $\overline\sigma$.
Since these parameters will  be fixed in the following and in order to keep the notation simple we will not indicate the dependence of $\Fo$ on
these parameter explicitly.

Now we are in the position to state continuity  properties  of  $\Fo$ derived in  \cite{EggSch14c}.
We include a short proof of these results as its understanding is very useful  for the derivation of similar properties of the operator describing the heating that we investigate in the following subsection.

\begin{theorem}[Lipschitz continuity and weak continuity of $\Fo$]\label{thm:contF}\mbox{}\begin{enumerate}[topsep=0.5em,itemsep=0em,label=(\alph*)]
\item \label{it:contF-a}
The operator $\Fo$ is  Lipschitz-continuous.

\item \label{it:contF-b}
If $1 < p < \infty$, then $\Fo$ is sequentially weakly  continuous.
\end{enumerate}
\end{theorem}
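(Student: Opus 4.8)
For \ref{it:contF-a} the plan is to reduce the difference of two solutions to an RTE with homogeneous inflow data and to control it through the a-priori estimate \eqref{eq:apriori}. Given $(\mu_1,\sigma_1),(\mu_2,\sigma_2)\in\dom_p$ with associated solutions $\Phi_1=\Fo\skl{\mu_1,\sigma_1}$ and $\Phi_2=\Fo\skl{\mu_2,\sigma_2}$, I would write both equations in the form \eqref{eq:strong} and subtract them. Since the source pattern $q$ and the boundary source $f$ are the same, a short computation shows that $\Phi_1-\Phi_2$ solves
\[
\kl{\theta\cdot\nabla_x + \mu_1 + \sigma_1 - \sigma_1\Ko}\kl{\Phi_1-\Phi_2}
= \kl{\mu_2-\mu_1}\Phi_2 + \kl{\sigma_2-\sigma_1}\kl{\Io-\Ko}\Phi_2
\]
with vanishing inflow trace. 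Applying \eqref{eq:apriori} with coefficients $(\mu_1,\sigma_1)$ then bounds $\norm{\Phi_1-\Phi_2}_{W^p}$ by $C_p(\overline\mu,\overline\sigma)$ times the $L^p$-norm of the right-hand side. The decisive step is to move the supremum norm onto $\Phi_2$: because $k$ is nonnegative with unit mass, $\norm{\Ko}_{L^\infty\to L^\infty}\le 1$, so the right-hand side is controlled by $\norm{\Phi_2}_{L^\infty}\kl{\norm{\mu_1-\mu_2}_{L^p}+2\norm{\sigma_1-\sigma_2}_{L^p}}$ (up to the fixed measure of $\sph^{d-1}$). Finally, invoking Theorem~\ref{thm:exist} with $p=\infty$ — admissible since $q\in L^\infty$ and $f\in L^\infty$ — yields the uniform bound $\norm{\Phi_2}_{L^\infty}\le M$ with $M\coloneqq C_\infty(\overline\mu,\overline\sigma)\kl{\norm{q}_{L^\infty}+\norm{f}_{L^\infty}}$, independent of the chosen parameters. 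This gives Lipschitz continuity with a constant uniform on $\dom_p$.

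For \ref{it:contF-b} I would argue by the subsequence principle. Let $(\mu_n,\sigma_n)\rightharpoonup(\mu,\sigma)$ weakly in $\dom_p$, which stays in $\dom_p$ because this set is closed, bounded and convex, hence weakly sequentially closed, and $1<p<\infty$. By \eqref{eq:apriori} the solutions $\Phi_n=\Fo\skl{\mu_n,\sigma_n}$ are bounded in the reflexive space $W^p$, and by the $p=\infty$ estimate they are uniformly bounded in $L^\infty$ by $M$. Consequently every subsequence admits a further subsequence with $\Phi_{n_k}\rightharpoonup\tilde\Phi$ in $W^p$, and it suffices to identify $\tilde\Phi=\Fo\skl{\mu,\sigma}$, since uniqueness in Theorem~\ref{thm:exist} then forces the whole sequence to converge weakly to $\Fo\skl{\mu,\sigma}$. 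The linear contributions pass to the limit directly, as $\Phi\mapsto\theta\cdot\nabla_x\Phi$ and the inflow trace $\Phi\mapsto\Phi|_{\Gamma_-}$ are bounded linear and therefore weakly continuous; in particular $\tilde\Phi|_{\Gamma_-}=f$.

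The main obstacle is passing to the limit in the products $\mu_n\Phi_n$, $\sigma_n\Phi_n$ and $\sigma_n\Ko\Phi_n$, where two merely weakly convergent factors meet and the limit of the product need not be the product of the limits. I would resolve this through the compactness provided by angular averaging: since $\Phi_n$ is bounded in $W^p$, the transport term $\theta\cdot\nabla_x\Phi_n$ is controlled, and the velocity-averaging lemma gives that the angular averages $x\mapsto\int_{\sph^{d-1}}\Phi_n(x,\theta)\beta(\theta)\,\rmd\theta$ converge strongly in $L^p(\Om)$; in particular $\Ko\Phi_n\to\Ko\tilde\Phi$ strongly. Testing the weak form of \eqref{eq:strong} against tensor functions $\alpha(x)\beta(\theta)$ and integrating over $\theta$ first turns each product into a weakly convergent coefficient paired with a strongly convergent average, so the weak$\times$strong principle applies; the uniform $L^\infty$ bound on $\Phi_n$ keeps these averages bounded in $L^\infty$ and legitimizes the pairing for every $1<p<\infty$. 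I expect securing this averaging compactness to be the crux, since the quadratic coupling between the unknown optical parameters and the photon density cannot otherwise survive the passage to the weak limit.
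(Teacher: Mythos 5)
Your part (a) is essentially the paper's own proof: the same difference equation with frozen coefficients, the a-priori estimate \eqref{eq:apriori}, and the uniform $L^\infty$ bound on $\Phi_2$ from Theorem~\ref{thm:exist} with $p=\infty$; your use of $\norm{\Ko}_{L^\infty\to L^\infty}\le 1$ is, if anything, a cleaner way to control the scattering term than the paper's factor $\norm{\Io-\Ko}_p$. For part (b), however, you take a genuinely different route. The paper never extracts subsequences and never identifies a limit by uniqueness: it notes that $\Phi_n-\Phi$ solves the RTE with the \emph{limit} coefficients $(\mu,\sigma)$ and right-hand side $(\mu-\mu_n)\Phi_n+(\sigma-\sigma_n)\Phi_n-(\sigma-\sigma_n)\Ko\Phi_n$, that the frozen-coefficient transport operator has a bounded, hence weakly continuous, inverse from $L^p$ to $W_0^p$ by Theorem~\ref{thm:exist}, and that the right-hand side tends to zero weakly in $L^p$ --- the latter again via Fubini and compactness of angular averaging. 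You instead extract a weakly convergent subsequence in the reflexive space $W^p$, pass to the limit in the weak formulation (velocity averaging plus the weak-times-strong principle), identify the limit as $\Fo(\mu,\sigma)$ by uniqueness, and invoke the subsequence principle. Both arguments share the same crux, the averaging lemma, which is what allows a weakly convergent coefficient to meet a weakly convergent photon density; but the paper's version obtains $\Phi_n-\Phi\rightharpoonup 0$ directly and skips the identification step, while yours is more self-contained and makes explicit where the uniform $L^\infty$ bound is needed to reconcile the exponents $p$ and $p_\ast$ --- a point the paper glosses over. One caveat common to both: pulling $\sigma-\sigma_n$ out of the angular integral (your tensor-product step, the paper's ``in the same manner'') is literally valid when the scattering coefficient depends on $x$ only, as in \eqref{eq:srt}, although $\dom_p$ formally allows $\sigma$ to depend on $(x,\theta)$; this is an imprecision inherited from the paper, not a gap in your argument relative to it.
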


\begin{proof} \mbox{}
\ref{it:contF-a}
Let $(\mu, \sigma), (\hat \mu, \hat \sigma) \in \dom_p$  be two given pairs of absorption and scattering coefficients and denote by  $\Phi \coloneqq \Fo (\mu, \sigma)$ and $\hat \Phi \coloneqq \Fo (\hat \mu, \hat \sigma)$ the corresponding solutions of  the stationary RTE.
Since  $q \in L^\infty \skl{\Om \times \sph^{d-1}}$ and
$f \in L^\infty\skl{\Gamma_-, \abs{\nu \cdot \theta}}$, Theorem~\ref{thm:exist} implies that  the difference $\hat \Phi - \Phi$ is an element of $W_0^\infty \skl{\Om \times \sph^{d-1}}$. Further, this  difference  is easily seen to satisfy
\begin{equation*}
\kl{\theta \cdot \nabla_x
+
\mu + \sigma - \sigma\Ko } \skl{\hat \Phi- \Phi}
 =
\kl{ \mu-\hat \mu }\hat \Phi + \skl{\sigma-\hat \sigma}
\hat \Phi -
\kl{\sigma-\hat\sigma} \Ko \hat \Phi\,.
\end{equation*}
Because $\Ko$ is a bounded linear operator on $L^p\skl{\Om \times \sph^{d-1}}$, the right hand side in the above  equation is actually contained in $L^p\skl{\Om \times \sph^{d-1}}$. Therefore, a further application of Theorem~\ref{thm:exist} yields
\begin{equation*}
 \snorm{\hat\Phi-\Phi}_{W^p \skl{\Om \times \sph^{d-1}}} \
 \leq C_p(\overline\mu, \overline\sigma) \snorm{\hat\Phi}_{L^\infty \skl{\Om \times \sph^{d-1}}}
 \kl{ \snorm{\mu-\hat\mu}_{L^p \skl{\Om}} + \norm{\Io - \Ko}_p  \snorm{\sigma-\hat\sigma}_{L^p \skl{\Om \times \sph^{d-1}}}}\,,
 \end{equation*}
where $\Io$ denotes the  identity and $\enorm{}_p$ the operator norm on $L^p \skl{\Om \times \sph^{d-1}}$. Since $\snorm{\hat\Phi}_{L^\infty \skl{\Om \times \sph^{d-1}}}$ is bounded independently of  $\hat\Phi$, this  implies the Lipschitz continuity of $\Fo$.

\ref{it:contF-b}
Let $(\mu_n, \sigma_n)_{n\in \N}$ be a sequence  in $\dom_p$ that converges weakly to the pair $(\mu, \sigma)\in \dom_p$, and denote by $\Phi_n = \Fo(\mu_n, \sigma_n)$ and  $\Phi = \Fo(\mu, \sigma)$ the corresponding solutions of the stationary RTE.
As in \ref{it:contF-a}, one argues  that  the difference $\Phi_n- \Phi $ is contained in $W_0^\infty \skl{\Om \times \sph^{d-1}}$ and satisfies
\begin{equation*}
\kl{\theta \cdot \nabla_x  + \mu + \sigma - \sigma\Ko } \kl{\Phi_n- \Phi}
 = \kl{ \mu- \mu_n }\Phi_n + \skl{\sigma- \sigma_n}
 \Phi_n -
\kl{\sigma-\sigma_n}  \Ko \Phi_n \,.
\end{equation*}
Now, from Theorem~\ref{thm:exist} it follows that $\theta \cdot \nabla_x  + \mu + \sigma - \sigma\Ko $ is invertible as an operator from $W_0^p \skl{\Om \times \sph^{d-1}}$ to  $L^p \skl{\Om \times \sph^{d-1}}$. Consequently, the inverse mapping   $\kl{\theta \cdot \nabla_x  + \mu + \sigma - \sigma\Ko}^{-1}$ is linear  and bounded and in particular weakly  continuous.
It therefore remains to show that  $ \kl{ \mu- \mu_n }\Phi_n + \skl{\sigma- \sigma_n}  \Phi_n -  \kl{\sigma-\sigma_n} \Ko \Phi_n$ weakly converges to zero in $L^p \skl{\Om \times \sph^{d-1}}$.
To see this,  denote by  $p_\ast=p/(p-1)$ the dual index and let  $\varphi \in L^{p_\ast}\skl{\Om \times \sph^{d-1}}$ be any element in the dual of $L^p\kl{\Om \times \sph^{d-1}}$. By Fubini's theorem we have
 \begin{equation*}
 \int_{\Om\times  \sph^{d-1}}\kl{ \mu(x)- \mu_n(x) }
 \Phi_n(x,\theta) \varphi(x,\theta) \, \rmd (x,\theta)
 =
  \int_{\Om} \kl{ \mu(x)- \mu_n(x) }
\kl{\int_{\sph^{d-1}} \Phi_n(x,\theta) \varphi(x,\theta)
\, \rmd \theta} \rmd x \,.
 \end{equation*}
The averaging lemma  (see, for example,  \cite{Mok97})
 implies that the averaging operator $\Ao \colon W^{p_\ast} \skl{\Om \times \sph^{d-1}} \to L^{p_\ast}\skl{\Om}\colon \Phi \mapsto  \int_{\sph^{d-1}} \Phi(\edot ,\theta)  \rmd \theta$  is compact for $1< p_\ast < \infty$.  Since $(\Phi_n)_{n\in \N}$ is bounded in $W^{\infty} \skl{\Om \times \sph^{d-1}} \subset W^{p_\ast} \skl{\Om \times \sph^{d-1}}$, this implies that  $\int_{\sph^{d-1}} \Phi_n(\edot,\theta) \varphi(\edot,\theta) \rmd \theta$ converges to $\int_{\sph^{d-1}} \Phi(\edot,\theta) \varphi(\edot,\theta) \rmd \theta$ with respect to $\enorm{}_{L^{p_\ast} \skl{\Om}}$. As $\mu_n\rightharpoonup \mu$  we can conclude that $ \skl{ \mu- \mu_n }\Phi_n$ converges to zero weakly.    In the same manner one shows $ \kl{ \sigma- \sigma_n }\Phi_n \rightharpoonup 0$. Finally, the equality
 \begin{equation*}
 \int_{\Om\times  \sph^{d-1}} \kl{ \mu- \mu_n }(x)
 \skl{\Ko \Phi_n}(x,\theta) \varphi(x,\theta) \rmd (x,\theta)
 =
  \int_{\Om} \kl{ \mu- \mu_n }(x)
\int_{\sph^{d-1}} \Phi_n(x,\theta) \skl{\Ko \varphi}(x,\theta) \rmd \theta \rmd x
 \end{equation*}
 and the use of similar  arguments
 show that
 $ \kl{ \sigma- \sigma_n } \Ko \Phi_n \rightharpoonup 0$.
\end{proof}

For the solution of the inverse problem of qPAT we will make use the  derivative of $\Fo$ that we compute next.
For that purpose we call $h \in  L^p(\Om) \times L^p(\Om \times \sph^{d-1})$ a feasible direction at $(\mu, \sigma) \in \dom_p$ if there exists some $\eps >0$ such that $(\mu, \sigma)  +  \eps h \in \dom_p$. Due to the convexity of $\dom_p$ we have $(\mu, \sigma)  +  s h \in \dom_p$ for all $ 0 \leq s \leq \eps$. The set of all feasible directions at $(\mu, \sigma) $ will be denoted by
$\dom_p\skl{\mu, \sigma} $. One immediately  sees that
\begin{equation*}
	\dom_p\skl{\mu, \sigma}
	= L^p(\Om) \times L^p(\Om \times \sph^{d-1})
	\quad \text{ if } 0 <  \mu < \overline\mu \text{ and }
	 0 <  \sigma < \overline\sigma \,.
\end{equation*}
For $(\mu, \sigma) \in \dom_p$ and any feasible direction $h\in \dom_p\skl{\mu, \sigma}$ we denote the  one-sided  directional derivative of $\Fo$ at $(\mu, \sigma)$ in direction $h$ by
\begin{equation} \label{eq:dd}
\Fo' (\mu, \sigma)(h) \coloneqq   \lim_{s \downarrow 0} \frac{\Fo((\mu, \sigma) + s h) -
\Fo (\mu, \sigma) }{s} \,,
\end{equation}
provided that the limit on the right hand side of \eqref{eq:dd} exists. If both  limits  $\Fo' (\mu, \sigma)(h)$ and $\Fo' (\mu, \sigma)(-h)$ exist and $h \mapsto \Fo' (\mu, \sigma)(h)$ is bounded and linear, we say that $\Fo$ is G\^ataux differentiable at $\skl{\mu, \sigma}$ and call $\Fo' (\mu, \sigma)$ the G\^ataux derivative of $\Fo$ at $(\mu, \sigma)$.

\begin{theorem}[Differentiability of $\Fo$]\label{thm:Fdiff}
For any $(\mu, \sigma) \in \dom_p$, the one-sided directional  derivative of $\Fo$ at  $(\mu, \sigma)$ in  direction $(h_\mu, h_\sigma)\in \dom_p\skl{\mu, \sigma}$ exists. Further, we have  $\Fo' (\mu, \sigma)(h_\mu, h_\sigma) = \Psi$, where  $\Psi$ is the unique solution of
\begin{equation}\label{eq:der}
\left\{
\begin{aligned}
\kl{ \theta \cdot \nabla_x
+
\kl{\mu + \sigma - \sigma\Ko} } \Psi
&=
- \kl{h_\mu + h_\sigma - h_\sigma\Ko} \Fo(\mu, \sigma)
&& \text{ in } \Om \times \sph^{d-1}\\
\Psi|_{\Gamma_-}
&=0
&& \text{ on } \Gamma_-\,.
\end{aligned}
\right.
\end{equation}
If $0 <  \mu < \overline\mu$ and
$0 < \sigma < \overline\sigma$, then $\Fo$ is G\^ateaux
differentiable at $\skl{\mu, \sigma}$.
\end{theorem}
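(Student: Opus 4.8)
The plan is to exploit that the pair $(\mu,\sigma)$ enters the stationary RTE linearly through its zeroth-order coefficients, so that the difference quotient of $\Fo$ solves an \emph{exact} linear transport equation rather than one carrying a Taylor remainder. Fix $(\mu,\sigma)\in\dom_p$ and a feasible direction $(h_\mu,h_\sigma)\in\dom_p\skl{\mu,\sigma}$, let $\eps>0$ be as in the definition of feasibility, and for $0<s\le\eps$ set $\Phi\coloneqq\Fo(\mu,\sigma)$ and $\Phi_s\coloneqq\Fo\skl{(\mu,\sigma)+s(h_\mu,h_\sigma)}$, which are well defined by convexity of $\dom_p$. Writing $\mathbf{L}\coloneqq\theta\cdot\nabla_x+(\mu+\sigma-\sigma\Ko)$ for the transport operator at $(\mu,\sigma)$, I would first record, exactly as in the proof of Theorem~\ref{thm:contF}\ref{it:contF-a}, that $\Phi_s-\Phi\in W_0^p\skl{\Om\times\sph^{d-1}}$ and that it solves
\begin{equation*}
\mathbf{L}\,(\Phi_s-\Phi)=-s\,(h_\mu+h_\sigma-h_\sigma\Ko)\,\Phi_s,\qquad(\Phi_s-\Phi)|_{\Gamma_-}=0,
\end{equation*}
which follows by subtracting the two RTEs and noting that the boundary data cancel. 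By Theorem~\ref{thm:exist} the operator $\mathbf{L}$ is boundedly invertible from $W_0^p$ onto $L^p$ with $\snorm{\mathbf{L}^{-1}}\le C_p(\overline\mu,\overline\sigma)$, so dividing by $s$ gives the exact identity $\tfrac1s(\Phi_s-\Phi)=-\mathbf{L}^{-1}(h_\mu+h_\sigma-h_\sigma\Ko)\Phi_s$.

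Next I would identify the candidate limit. Since $q,f\in L^\infty$, Theorem~\ref{thm:exist} yields $\Phi\in L^\infty\skl{\Om\times\sph^{d-1}}$, and the box constraints defining $\dom_p$ force every feasible direction to be bounded: from $(\mu,\sigma)+\eps(h_\mu,h_\sigma)\in\dom_p$ and $0\le\mu\le\overline\mu$, $0\le\sigma\le\overline\sigma$ one reads off $\snorm{h_\mu}_{L^\infty}\le\overline\mu/\eps$ and $\snorm{h_\sigma}_{L^\infty}\le\overline\sigma/\eps$. Hence $(h_\mu+h_\sigma-h_\sigma\Ko)\Phi\in L^p$, and I set $\Psi\coloneqq-\mathbf{L}^{-1}(h_\mu+h_\sigma-h_\sigma\Ko)\Phi$, which is precisely the unique $W_0^p$-solution of \eqref{eq:der}. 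Subtracting the two representations produces
\begin{equation*}
\tfrac1s(\Phi_s-\Phi)-\Psi=-\mathbf{L}^{-1}(h_\mu+h_\sigma-h_\sigma\Ko)(\Phi_s-\Phi),
\end{equation*}
so that, with a constant $C$ depending only on $\snorm{h_\mu}_{L^\infty}$, $\snorm{h_\sigma}_{L^\infty}$, $\Ko$ and $C_p(\overline\mu,\overline\sigma)$,
\begin{equation*}
\norm{\tfrac1s(\Phi_s-\Phi)-\Psi}_{W^p\skl{\Om\times\sph^{d-1}}}\le C\,\norm{\Phi_s-\Phi}_{L^p\skl{\Om\times\sph^{d-1}}}.
\end{equation*}

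It then remains to send $s\downarrow0$. The Lipschitz continuity of $\Fo$ from Theorem~\ref{thm:contF}\ref{it:contF-a} (equivalently, the displayed difference equation combined with the uniform a-priori bound on $\snorm{\Phi_s}_{L^\infty}$) gives $\snorm{\Phi_s-\Phi}_{W^p}=O(s)$, whence $\Phi_s\to\Phi$ in $L^p$ and the right-hand side above tends to $0$. This shows that the one-sided directional derivative \eqref{eq:dd} exists in $W^p$ and equals $\Psi$. For the G\^ateaux statement, if $0<\mu<\overline\mu$ and $0<\sigma<\overline\sigma$ then every direction is feasible, so both $\Fo'(\mu,\sigma)(h_\mu,h_\sigma)$ and $\Fo'(\mu,\sigma)(-h_\mu,-h_\sigma)$ exist; and since $\Phi$ is fixed, the map $(h_\mu,h_\sigma)\mapsto-\mathbf{L}^{-1}(h_\mu+h_\sigma-h_\sigma\Ko)\Phi$ is manifestly linear and, using $\Phi\in L^\infty$ together with boundedness of $\Ko$ and $\mathbf{L}^{-1}$, bounded from $L^p(\Om)\times L^p\skl{\Om\times\sph^{d-1}}$ into $W^p$. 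That is exactly G\^ateaux differentiability.

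The one place beyond bookkeeping is the passage to the limit in the product $(h_\mu+h_\sigma-h_\sigma\Ko)(\Phi_s-\Phi)$: one needs the associated multiplication-and-scattering map to be bounded on $L^p$, and this is where the observation that feasible directions are automatically in $L^\infty$ (forced by the box constraints) is essential, in combination with the $L^p$-convergence $\Phi_s\to\Phi$ supplied by Lipschitz continuity. Apart from this, the argument is driven entirely by the exactness of the difference identity and the bounded invertibility of $\mathbf{L}$, so I expect no genuine obstacle beyond verifying these mapping properties carefully.
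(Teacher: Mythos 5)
Your proposal is correct and is essentially the paper's own argument: both proofs form the difference quotient $\Psi_s=(\Phi_s-\Phi)/s$, observe that it solves the \emph{exact} (remainder-free) linear transport equation with source $-\kl{h_\mu+h_\sigma-h_\sigma\Ko}\Phi_s$, subtract the candidate limit $\Psi$ solving \eqref{eq:der}, pass to the limit using the a-priori estimate of Theorem~\ref{thm:exist} together with the continuity of $\Fo$, and then get G\^ateaux differentiability from linearity and boundedness of the solution map of \eqref{eq:der}. The only (minor, and arguably favorable) deviation is your H\"older split: you put the feasible direction in $L^\infty$ (justified by the box constraints defining $\dom_p$) and the difference $\Phi_s-\Phi$ in $L^p$, so that convergence follows directly from the $W^p$-Lipschitz continuity of Theorem~\ref{thm:contF}, whereas the paper's estimate $\snorm{\Psi_s-\Psi}_{W^p}\leq C_p(\overline\mu,\overline\sigma)\snorm{\Phi_s-\Phi}_{L^\infty}\bigl(\snorm{h_\mu}_{L^p}+\snorm{h_\sigma}_{L^p}\bigr)$ tacitly requires $\Phi_s\to\Phi$ in $L^\infty$, which itself rests on the same observation that feasible directions are essentially bounded.
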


\begin{proof}\mbox{}
 Suppose  $(\mu, \sigma)  \in \dom_p$ and let
 $h= (h_\mu, h_\sigma)\in \dom_p\skl{\mu, \sigma}$ be any feasible direction.
 For sufficiently small $s>0$  write
$\Phi_s \coloneqq  \Fo ( (\mu, \sigma) + sh)$ and $\Phi \coloneqq  \Fo (\mu, \sigma)$. As in the proof of Theorem \ref{thm:contF}  one shows that  $\Psi_s \coloneqq \skl{\Phi_s - \Phi}/s$ is contained in  $W^p_0 \skl{\Om \times \sph^{d-1}}$ and solves the equation $\skl{\theta \cdot \nabla_x
+
\mu + \sigma - \sigma\Ko }\Psi_s
 = -\skl{h_\mu + h_\sigma - h_\sigma\Ko }\Phi_s$.
 Consequently the difference
$\Psi_s- \Psi \in W^p_0 \skl{\Om \times \sph^{d-1}} $ solves
\begin{equation*}
\kl{\theta \cdot \nabla_x
+
\mu + \sigma - \sigma\Ko }\kl{\Psi_s- \Psi}
= - \skl{h_\mu + h_\sigma - h_\sigma\Ko } \skl{\Phi_s- \Phi} \,.
\end{equation*}
Application of the a-priori estimate of Theorem~\ref{thm:exist} shows the inequality  $
  \snorm{\Psi_s- \Psi}_{W^p \skl{\Om \times \sph^{d-1}}}
  \leq C_p(\overline\mu, \overline\sigma) \snorm{\Phi_s- \Phi}_{L^\infty \skl{\Om \times \sph^{d-1}}}
  ( \snorm{h_\mu }_{L^p \skl{\Om}} + \snorm{h_\sigma}_{L^p \skl{\Om \times \sph^{d-1}}})$
 Together with the continuity of $\Fo$   this implies
that the one-sided directional derivative $\Fo' \skl{\mu, \sigma }(h)$ exists and is given by $\lim_{s\to 0} \Psi_s = \Psi$. Finally, if $0 <  \mu < \overline\mu$ and
$0 < \sigma < \overline\sigma$, then  $h \mapsto \Fo' \skl{\mu, \sigma }(h)$ is bounded and linear and
therefore $\Fo$ is G\^ateaux differentiable at $\skl{\mu, \sigma}$.
\end{proof}

Note that  for any  parameter pair $\skl{\mu, \sigma} \in \dom_p$, the  solution
of \eqref{eq:der} depends linearly and continuously on $(h_\mu, h_\sigma) \in L^p(\Om)\times L^p(\Om\times \sph^{d-1})$. As a consequence, the one-sided directional derivative
can be extended to a bounded  linear operator
 \begin{equation} \label{eq:derF}
 	\Fo'(\mu, \sigma) \colon
	L^p(\Om)\times L^p(\Om\times \sph^{d-1}) \to W^p \skl{\Om \times \sph^{d-1}} \colon (h_\mu, h_\sigma)
	\mapsto \Psi \,,
\end{equation}
where $\Psi$ is the unique solution of  \eqref{eq:der}.
We refer to this extension as the derivative
of $\Fo$ at $(\mu, \sigma)$.

\subsection{The operator $\Ho$ describing the heating}
\label{sec:H}

Throughout this subsection,  let $q\in L^\infty\skl{\Om\times \sph^{d-1}}$ and $f \in L^\infty\kl{\Gamma_-, \abs{\nu \cdot \theta}}$ be given source pattern and {\blau boundary light source}, respectively.
As already mentioned in the introduction,
 photoacoustic signal generation due to the absorption of  light
 is described by the operator
\begin{equation*}
\Ho \colon  \dom_p
\to L^p(\Om) \colon
\kl{\mu,\sigma}
\mapsto  \mu\int_{\sph^{d-1}}
\Fo \skl{\mu, \sigma}(\edot,\theta)\rmd  \theta \,.
\end{equation*}
To shorten the notation, in the following we will make use of the averaging operator $\Ao  \colon W^p \skl{ \Om \times \sph^{d-1}} \to L^p \skl{ \Om}$ defined by $\Ao \Phi = \int_{\sph^{d-1}} \Phi\skl{\edot, \theta} \rmd \theta$.
By H\"olders inequality the averaging operator is well defined, linear  and bounded.
Using  the averaging operator we can write $\Ho \skl{\mu, \sigma} = \mu \Ao \Fo \skl{\mu, \sigma}$.

\begin{theorem}[Lipschitz continuity and weak continuity of $\Ho$] \label{thm:heating}\mbox{}
\begin{enumerate}[topsep=0.5em,itemsep=0em,label=(\alph*)]
\item \label{it:heating-a}
The operator $\Ho$ is  Lipschitz continuous.
\item \label{it:heating-b}
If $1< p < \infty$, then $\Ho$ is sequentially weakly continuous.
\end{enumerate}
\end{theorem}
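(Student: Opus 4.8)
The plan is to reduce both assertions to the corresponding properties of $\Fo$ already available from Theorem~\ref{thm:contF}, exploiting the factorization $\Ho\skl{\mu,\sigma}=\mu\,\Ao\Fo\skl{\mu,\sigma}$ together with the boundedness (and, for part~\ref{it:heating-b}, the compactness) of the averaging operator $\Ao$. A recurring ingredient will be that, since the fixed sources satisfy $q\in L^\infty\skl{\Om\times \sph^{d-1}}$ and $f\in L^\infty\kl{\Gamma_-,\abs{\nu\cdot\theta}}$, Theorem~\ref{thm:exist} applied with $p=\infty$ yields a bound on $\snorm{\Fo\skl{\mu,\sigma}}_{L^\infty\skl{\Om\times\sph^{d-1}}}$ that is uniform over $\dom_p$; consequently $\Ao\Fo\skl{\mu,\sigma}\in L^\infty\skl{\Om}$ with a norm bounded uniformly in $\dom_p$. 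In both parts the only real difficulty is the leading factor $\mu$, which is handled by a telescoping (product) decomposition.

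For part~\ref{it:heating-a} I would take $\skl{\mu,\sigma},\skl{\hat\mu,\hat\sigma}\in\dom_p$ and write
\begin{equation*}
\Ho\skl{\mu,\sigma}-\Ho\skl{\hat\mu,\hat\sigma}
= \mu\,\Ao\bigl(\Fo\skl{\mu,\sigma}-\Fo\skl{\hat\mu,\hat\sigma}\bigr)
+ \skl{\mu-\hat\mu}\,\Ao\Fo\skl{\hat\mu,\hat\sigma}\,.
\end{equation*}
The first term is bounded in $L^p\skl{\Om}$ using $0\le\mu\le\overline\mu$, the boundedness of $\Ao\colon W^p\skl{\Om\times\sph^{d-1}}\to L^p\skl{\Om}$, and the Lipschitz continuity of $\Fo$ from Theorem~\ref{thm:contF}\ref{it:contF-a}. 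For the second term I would invoke the uniform $L^\infty$-bound on $\Ao\Fo\skl{\hat\mu,\hat\sigma}$ noted above, so that H\"older's inequality estimates it by a constant times $\snorm{\mu-\hat\mu}_{L^p\skl{\Om}}$. Adding the two bounds gives the Lipschitz estimate.

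For part~\ref{it:heating-b} let $\skl{\mu_n,\sigma_n}\rightharpoonup\skl{\mu,\sigma}$ in $\dom_p$ and set $\Phi_n=\Fo\skl{\mu_n,\sigma_n}$ and $\Phi=\Fo\skl{\mu,\sigma}$. By Theorem~\ref{thm:contF}\ref{it:contF-b} we have $\Phi_n\rightharpoonup\Phi$ in $W^p\skl{\Om\times\sph^{d-1}}$. The crucial step is to upgrade this to strong convergence of the averages: since the averaging lemma gives the compactness of $\Ao\colon W^p\skl{\Om\times\sph^{d-1}}\to L^p\skl{\Om}$ for $1<p<\infty$, and compact operators map weakly convergent sequences to norm-convergent ones, it follows that $\Ao\Phi_n\to\Ao\Phi$ strongly in $L^p\skl{\Om}$. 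To conclude $\mu_n\Ao\Phi_n\rightharpoonup\mu\Ao\Phi$ I would test against an arbitrary $\varphi\in L^{p_\ast}\skl{\Om}$ and split
\begin{equation*}
\mu_n\Ao\Phi_n-\mu\Ao\Phi
= \mu_n\bigl(\Ao\Phi_n-\Ao\Phi\bigr)+\skl{\mu_n-\mu}\,\Ao\Phi \,.
\end{equation*}
The first summand tends to zero because $\snorm{\mu_n}_{L^\infty}\le\overline\mu$ and $\Ao\Phi_n\to\Ao\Phi$ strongly; the second tends to zero because $\Ao\Phi\in L^\infty\skl{\Om}$ forces $\skl{\Ao\Phi}\varphi\in L^{p_\ast}\skl{\Om}$, against which $\mu_n\rightharpoonup\mu$ in $L^p\skl{\Om}$ may be applied.

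I expect the main obstacle to be exactly this last product: a product of two merely weakly convergent factors need not converge to the product of the limits. The compactness of the averaging operator (the averaging lemma), in tandem with the uniform $L^\infty$-bound on the transport solutions, is precisely what converts one factor into a strongly convergent sequence and thereby makes the weak-continuity argument work.
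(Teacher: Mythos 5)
Your proof is correct. Part~\ref{it:heating-a} is essentially the paper's own argument: the same product (telescoping) decomposition, with the difference term estimated through the Lipschitz continuity of $\Fo$ (Theorem~\ref{thm:contF}) and the boundedness of $\Ao$, and the $(\mu-\hat\mu)$-term estimated through the uniform $L^\infty$ bound on $\Ao\Fo$ coming from Theorem~\ref{thm:exist} with $p=\infty$; only the choice of the intermediate term in the telescoping differs, which is immaterial. For part~\ref{it:heating-b}, however, you take a genuinely different --- and in fact more careful --- route. The paper tests $\mu_n\Ao\Fo(\mu_n,\sigma_n)-\mu\Ao\Fo(\mu,\sigma)$ against a fixed $\varphi\in L^{p_\ast}(\Om)$, splits as you do, and then concludes from the weak convergence of the two factors $\mu_n$ and $\Ao\Fo(\mu_n,\sigma_n)$ alone. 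As written this glosses over exactly the obstacle you identify: the term $\int_\Om \mu_n\,\bigl(\Ao\Fo(\mu,\sigma)-\Ao\Fo(\mu_n,\sigma_n)\bigr)\varphi\,\rmd x$ has the \emph{varying} functions $\mu_n\varphi$ as test functions, and weak convergence of $\Ao\Fo(\mu_n,\sigma_n)$ against a moving sequence of test functions is not enough (a product of two weakly convergent sequences need not converge to the product of the limits). Your fix --- invoking the averaging lemma to get compactness of $\Ao\colon W^p\to L^p$ for $1<p<\infty$, hence \emph{strong} $L^p$ convergence $\Ao\Phi_n\to\Ao\Phi$, so that one factor in the product converges in norm --- is precisely what is needed; it mirrors how the paper itself handles the analogous product $(\mu-\mu_n)\Phi_n$ in the proof of Theorem~\ref{thm:contF}\ref{it:contF-b}, and it uses a fact the paper only records in the remark \emph{after} Theorem~\ref{thm:heating} (compactness of $\Ao$, hence of $\Ao\Fo$). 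In short: your part~\ref{it:heating-a} coincides with the paper's proof, while your part~\ref{it:heating-b} is the rigorous completion of an argument that the paper states somewhat loosely.
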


\begin{proof}\mbox{} \ref{it:heating-a} Suppose that $(\mu, \sigma), (\hat \mu, \hat \sigma) \in \dom_p$ are two
pairs of admissible absorption and scattering coefficients.
The decomposition $\Ho \skl{\mu, \sigma} = \mu \Ao \Fo \skl{\mu, \sigma}$ and the triangle inequality imply
\begin{multline*}
 \norm{  \mu \Ao \Fo \skl{\mu, \sigma}
 - \hat \mu \Ao \Fo \skl{\hat\mu, \hat\sigma}}_{L^p(\Om)}
\\
\begin{aligned}
&=
\norm{  \mu \Ao \Fo \skl{\mu, \sigma}-  \hat\mu \Ao \Fo \skl{\mu, \sigma} + \hat\mu \Ao \Fo \skl{\mu, \sigma}
 - \hat \mu \Ao \Fo \skl{\hat\mu, \hat\sigma}}_{L^p(\Om)}
 \\
 &\leq
\norm{\Ao \Fo \skl{\mu, \sigma}}_{L^\infty(\Om)}
\norm{\mu-\hat\mu}_{L^p(\Om)} +
  \norm{\hat\mu}_{L^\infty(\Om)} \norm{\Ao \Fo \skl{\mu, \sigma} -  \Ao \Fo \skl{\hat\mu, \hat\sigma}}_{L^p(\Om)} \,.
\end{aligned}
\end{multline*}
According to Theorem \ref{thm:contF}, the operator $\Fo$ is Lipschitz continuous.
Because $\Ao$ is linear and bounded, also the composition $\Ao \Fo$ is  Lipschitz.
Noting that $\snorm{\Ao \Fo \skl{\mu, \sigma}}_{L^\infty(\Om)}$ and $\snorm{\hat\mu}_{L^\infty(\Om)}$ are  bounded by constants independent of  $\mu, \sigma$ and $\hat\mu, \hat\sigma$, this implies the  Lipschitz continuity of  $\Ho$.

\ref{it:heating-b}
Let $(\mu_n, \sigma_n)_{n\in \N}$ be a sequence in $\dom_p$ that converges weakly to $ ( \mu, \sigma) \in \dom_p$.
Since $\Fo$ is weakly continuous and $\Ao$ is linear
and bounded,   $(\Ao \Fo \skl{\mu_n, \sigma_n})_{n \in \N}$ converges weakly  to $\Ao \Fo \skl{\mu, \sigma}$.  Further, for  any  function $\varphi \in  L^{p_\star}(\Om)$, the dual space of $L^p(\Om)$, we have
\begin{multline*}
\abs{ \int_\Om  \kl{ \mu(x)  \skl{\Ao \Fo} \skl{\mu, \sigma}(x)
 -    \mu_n(x)  \skl{\Ao \Fo} \skl{\mu_n, \sigma_n}(x)    }\varphi(x) \,  \rmd x}
\\
\begin{aligned}
& \leq
\norm{\Ao \Fo \skl{\mu, \sigma} }_{L^\infty(\Om)}
\abs{ \int_\Om  \kl{ \mu(x)    - \mu_n(x)  }  \varphi(x)
\, \rmd x}
  \\
  &\qquad+
 \norm{\mu_n }_{L^\infty(\Om)}
\abs{ \int_\Om  \kl{  (\Ao \Fo) \skl{\mu, \sigma}(x)
 -    (\Ao \Fo) \skl{\mu_n, \sigma_n}(x)   } \varphi(x)
 \rmd x }\,.
\end{aligned}
\end{multline*}
The weak convergence of $\mu_n$ and $(\Ao \Fo \skl{\mu_n, \sigma_n})_{n \in \N}$  therefore implies the weak convergence of $ \mu_n \Ao \Fo \skl{\mu_n, \sigma_n}$ to $\mu \Ao \Fo \skl{\mu, \sigma}$ and shows the weak continuity of $\Ho$.
\end{proof}

Note that for the case $1 < p < \infty$, the  averaging operator $\Ao$ is  even  compact (see  \cite{Mok97}) which implies the
compactness of the composition $\Ao \Fo$.
As a consequence, for any given $\mu$, the partial mapping
$\sigma \mapsto \mu \skl{\Ao \Fo} (\mu, \sigma)$ is compact. It seems unlikely, however,  that the full operator
$\Ho$ is compact, too.

\begin{theorem}[Differentiability of $\Ho$]\label{thm:Hdiff}
For any $(\mu, \sigma) \in \dom_p$, the one-sided directional derivative of $\Ho$ at  $(\mu, \sigma)$ in
any feasible direction  $(h_\mu, h_\sigma) \in   \dom_p\skl{\mu, \sigma}$ exists. Further, we have
\begin{equation} \label{eq:derH}
\Ho' \skl{\mu,\sigma}(h_\mu, h_\sigma)
= h_\mu \int_{\sph^{d-1}}\Fo \skl{\mu, \sigma}(\edot,\theta)\rmd  \theta
+
\mu\int_{\sph^{d-1}} \Fo'\skl{\mu,\sigma}\skl{h_\mu, h_\sigma}(\edot,\theta)\rmd  \theta \,,
\end{equation}
where $\Fo'\skl{\mu,\sigma}\skl{h_\mu, h_\sigma}$ denotes the one-sides directional derivative
of $\Fo$ at  $(\mu, \sigma)$ in direction  $(h_\mu, h_\sigma)$ and can be computed as the solution of   \eqref{eq:der}.
Finally, if $0 <  \mu < \overline\mu$ and
$0 < \sigma < \overline\sigma$, then $\Ho$ is G\^ateaux differentiable at $\skl{\mu, \sigma}$.
\end{theorem}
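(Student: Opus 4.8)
The plan is to run a product rule inside the difference quotient, exploiting the factorization $\Ho\skl{\mu,\sigma}=\mu\,\Ao\Fo\skl{\mu,\sigma}$ together with the differentiability of $\Fo$ from Theorem~\ref{thm:Fdiff}. Fix $\skl{\mu,\sigma}\in\dom_p$ and a feasible direction $\skl{h_\mu,h_\sigma}\in\dom_p\skl{\mu,\sigma}$, and for small $s>0$ abbreviate $\Phi_s\coloneqq\Fo\skl{\skl{\mu,\sigma}+s\skl{h_\mu,h_\sigma}}$ and $\Phi\coloneqq\Fo\skl{\mu,\sigma}$. Since $\Ho\skl{\skl{\mu,\sigma}+s\skl{h_\mu,h_\sigma}}=\skl{\mu+sh_\mu}\Ao\Phi_s$, adding and subtracting $\mu\,\Ao\Phi_s$ gives
\begin{equation*}
\frac{\Ho\skl{\skl{\mu,\sigma}+s\skl{h_\mu,h_\sigma}}-\Ho\skl{\mu,\sigma}}{s}
= h_\mu\,\Ao\Phi_s + \mu\,\Ao\frac{\Phi_s-\Phi}{s}\,,
\end{equation*}
and I would pass to the limit $s\downarrow0$ in the two summands separately.

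The second summand is the benign one. By Theorem~\ref{thm:Fdiff} the quotient $\skl{\Phi_s-\Phi}/s$ converges in $W^p\skl{\Om\times\sph^{d-1}}$ to $\Fo'\skl{\mu,\sigma}\skl{h_\mu,h_\sigma}$, so boundedness of $\Ao$ yields convergence of $\Ao\skl{\Phi_s-\Phi}/s$ to $\Ao\Fo'\skl{\mu,\sigma}\skl{h_\mu,h_\sigma}$ in $L^p\skl{\Om}$; multiplication by the fixed $L^\infty$-factor $\mu$ (recall $0\le\mu\le\overline\mu$) preserves this convergence, and the limit is exactly the second term of \eqref{eq:derH}.

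The main obstacle is the first summand $h_\mu\,\Ao\Phi_s$. The Lipschitz continuity of $\Fo$ (Theorem~\ref{thm:contF}) only delivers $\Ao\Phi_s\to\Ao\Phi$ in $L^p\skl{\Om}$, and because $h_\mu$ lies merely in $L^p\skl{\Om}$ a product of two $L^p$-factors need not converge in $L^p$. The point that rescues the argument is that $q,f\in L^\infty$, so Theorem~\ref{thm:exist} applied with $p=\infty$ (the perturbed coefficients still obey $0\le\mu+sh_\mu\le\overline\mu$, $0\le\sigma+sh_\sigma\le\overline\sigma$ for small $s$) bounds $\snorm{\Phi_s}_{L^\infty}$ by a constant $M$ independent of $s$; averaging over the sphere then gives $\snorm{\Ao\Phi_s}_{L^\infty(\Om)},\snorm{\Ao\Phi}_{L^\infty(\Om)}\le\abs{\sph^{d-1}}M$. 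With this uniform $L^\infty$-bound I would use a density argument: given $\eps>0$ choose $g\in L^\infty(\Om)$ with $\snorm{h_\mu-g}_{L^p}<\eps$ and estimate
\begin{equation*}
\snorm{h_\mu\Ao\Phi_s-h_\mu\Ao\Phi}_{L^p}
\le \snorm{h_\mu-g}_{L^p}\,\snorm{\Ao\Phi_s-\Ao\Phi}_{L^\infty}
+ \snorm{g}_{L^\infty}\,\snorm{\Ao\Phi_s-\Ao\Phi}_{L^p}\,,
\end{equation*}
where $\snorm{\Ao\Phi_s-\Ao\Phi}_{L^\infty}\le 2\abs{\sph^{d-1}}M$. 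Letting $s\downarrow0$ makes the second term vanish, so $\limsup_{s\downarrow0}\snorm{h_\mu\Ao\Phi_s-h_\mu\Ao\Phi}_{L^p}\le 2\abs{\sph^{d-1}}M\eps$, and since $\eps$ is arbitrary the first summand converges in $L^p\skl{\Om}$ to $h_\mu\,\Ao\Phi$, the first term of \eqref{eq:derH}. (Equivalently, one could pass to a subsequence on which $\Ao\Phi_s\to\Ao\Phi$ a.e. and invoke dominated convergence with the integrable majorant $M\abs{h_\mu}$.)

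Combining the two limits establishes existence of the one-sided directional derivative and the formula \eqref{eq:derH}. For the final claim, when $0<\mu<\overline\mu$ and $0<\sigma<\overline\sigma$ we have $\dom_p\skl{\mu,\sigma}=L^p(\Om)\times L^p\skl{\Om\times\sph^{d-1}}$, so the one-sided derivatives exist in every direction; moreover the right-hand side of \eqref{eq:derH} is manifestly linear in $\skl{h_\mu,h_\sigma}$ and bounded, since $\snorm{h_\mu\Ao\Fo\skl{\mu,\sigma}}_{L^p}\le\abs{\sph^{d-1}}M\snorm{h_\mu}_{L^p}$ and $\skl{h_\mu,h_\sigma}\mapsto\mu\,\Ao\Fo'\skl{\mu,\sigma}\skl{h_\mu,h_\sigma}$ is bounded and linear by \eqref{eq:derF} and boundedness of $\Ao$ and $\mu\in L^\infty$. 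Hence $\Ho$ is G\^ateaux differentiable there, exactly as at the end of the proof of Theorem~\ref{thm:Fdiff}.
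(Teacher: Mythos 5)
Your proposal is correct and follows essentially the same route as the paper's proof: the same product-rule decomposition of the difference quotient $h_\mu\,\Ao\Phi_s + \mu\,\Ao(\Phi_s-\Phi)/s$, with the second term handled via Theorem~\ref{thm:Fdiff} and boundedness of $\Ao$, and the G\^ateaux claim concluded exactly as in the paper. The only difference is that you carefully justify the convergence of the first summand (via the uniform $L^\infty$ bound from Theorem~\ref{thm:exist} plus a density argument), a step the paper dispatches by simply citing continuity of $\Fo$ --- a sound refinement, though not needed in substance, since the feasibility constraint $0\le\mu+\eps h_\mu\le\overline\mu$ already forces $h_\mu\in L^\infty(\Om)$, after which H\"older's inequality suffices.
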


\begin{proof}
Let $(\mu, \sigma)   \in \dom_p$ and   let
$(h_\mu, h_\sigma) \in   \dom_p\skl{\mu, \sigma}$ be a feasible direction.  For sufficiently small $s>0$,
we have
\begin{multline*}
\frac{\Ho \skl{(\mu, \sigma)+s (h_\mu, h_\sigma)} - \Ho\skl{\mu, \sigma} }{s}
\\
\begin{aligned}
&=
\frac{ (\mu + s h_\mu) \skl{\Ao \Fo}\kl{\skl{\mu, \sigma}+ s(h_\mu, h_\sigma)}  - \mu \skl{\Ao \Fo} \kl{\mu, \sigma} }{s}
\\
&=  h_\mu \skl{\Ao \Fo}\kl{\skl{\mu, \sigma}+ s(h_\mu, h_\sigma)}
+ \mu \frac{ \skl{\Ao \Fo}\kl{\skl{\mu, \sigma}+ s(h_\mu, h_\sigma)}  - \skl{\Ao \Fo} \kl{\mu, \sigma} }{s} \,.
\end{aligned}
\end{multline*}
According to Theorem~\ref{thm:contF}, the operator $\Fo$ is continuous and therefore the first term converges to $h_\mu \skl{\Ao \Fo} \skl{\mu, \sigma}$ as $s \to 0$.
Because $\Fo$ is  one-sided differentiable, see Theorem \ref{thm:Fdiff}, the second term converges to
$\mu  \Ao  \Fo'\skl{\mu,\sigma}\skl{h_\mu, h_\sigma}$.
Finally, if  $0 <  \mu < \overline\mu$ and  $0 < \sigma < \overline\sigma$, then $\Ho' \skl{\mu,\sigma}(h)$
is linear and bounded in the argument  $h$ which implies the G\^ateaux differentiability of $\Ho$ at $\skl{\mu, \sigma}$.
\end{proof}

Recall that for any $\skl{\mu, \sigma} \in \dom_p$, the derivative $\Fo'\skl{\mu,\sigma}$ is bounded and linear. Therefore,
the right hand side of \eqref{eq:derH}  depends linearly and continuously on $(h_\mu, h_\sigma) \in L^p(\Om)\times L^p(\Om\times \sph^{d-1})$.
As a consequence, the one-sided directional derivative of $\Ho$ at $\skl{\mu, \sigma} $  can be extended to a bounded
linear operator $\Ho'(\mu, \sigma) \colon L^p(\Om)\times L^p(\Om\times \sph^{d-1}) \to L^p \skl{\Om}$.
We will refer to this extension as the derivative of $\Ho$ at $(\mu, \sigma)$.  The derivative of $\Ho$
can be written in the form $\Ho'(\mu, \sigma)(h) = h_\mu  \Ao ( \Fo  (\mu, \sigma)) +
\mu \Ao  ( \Fo'(\mu, \sigma) (h))$.

\subsection{The wave operator $\Wo_{\Om,\La}$}
\label{sec:W}

Let $U \subset \R^d$ be a bounded and convex domain with smooth boundary.   We assume that $\bar \Om \subset U$ and write $L^2_\Om \skl{\R^d}$ for the space of all square integrable functions defined on $\R^d$ that are supported in $\bar \Om$. Likewise we denote by $C^\infty_\Om \skl{\R^d}$ the space of all infinitely differentiable functions defined on $\R^d$ having support in $\bar\Om$. Further, let $\La \subset  \partial U$ be a relatively open  subset of $\partial U$, and denote by $\diam(U)$  the maximal diameter of $U$ and
by $\dist(\Om, \La)$ the distance between $\Om$ and the observation surface $\La$.

\begin{definition}[The wave operator $\Wo_{\Om,\La}$] \label{def:W}
Let $w_{\Om,\La}  \colon  \kl{0, \infty} \to \R$ be a smooth, nonnegative, compactly supported function
with  $w_{\Om,\La} (t) = 1$ for all $\dist(\Om,\La) \leq t  \leq \diam(U) - \dist(\Om,\La)$.
We then define the wave operator  by
\begin{equation} \label{eq:solW}
\begin{aligned}
	\Wo_{\Om,\La} \colon C^\infty_\Om \skl{\R^d} \subset L^2_\Om \skl{\R^d}
	\to L^2 \kl{\La  \times\skl{0,\infty}}
	\colon h    \mapsto  w_{\Om,\La} \, p|_{\La\times\skl{0,\infty}}   \,,
\end{aligned}
\end{equation}
where $p$ denotes the unique solution of  \eqref{eq:wave-ini}.
\end{definition}

The operator $\Wo_{\Om,\La}$ maps the initial data of the wave equation \eqref{eq:wave-ini} to its solution restricted to  $\La \subset \partial U$ and models the
acoustic part of the forward problem of PAT. The cutoff function $w_{\Om,\La}$ accounts for the fact,
that in the two dimensional case the solution of the wave equation has unbounded
support in time but measurements can only be made over a finite time interval.

In the following we use a result from \cite{Pal10} to show  that
$\Wo_{\Om,\La}$ is a bounded linear and densely defined operator,
and therefore can be extended to a bounded linear operator on $L^2_\Om \skl{\R^d}$ in a unique manner.

\begin{theorem}[Continuity of the wave operator $\Wo_{\Om,\La}$]\label{thm:wave}
There exists some constant $c_{\Om,\La}$ such that
$
\snorm{\Wo_{\Om,\La} h}_{L^2\skl{\La\times\skl{0,\infty}}} \leq c_{\Om,\La} \snorm{h}_{L^2_\Om  \skl{\R^d}} $ for all $h \in C^\infty_\Om   \skl{\R^d}$.
Consequently, there exists a unique bounded  linear extension
\begin{equation*}
 \overline \Wo_{\Om,\La} \colon  L^2_\Om \skl{\R^d} \to L^2 \kl{\La\times\skl{0,\infty}} \quad \text{ with  } \;
 \overline \Wo_{\Om,\La}|_{C^\infty_\Om \skl{\R^d}} = \Wo_{\Om,\La} \,.
\end{equation*}
With some abuse of notation we again write $\Wo_{\Om,\La}$ for  $\overline \Wo_{\Om,\La}$ in the sequel.
\end{theorem}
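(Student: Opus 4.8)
The plan is to separate the one nontrivial ingredient, a space-time trace estimate taken from \cite{Pal10}, from the purely functional-analytic extension argument, which is routine.

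First I would record the setup. For $h \in C^\infty_\Om\skl{\R^d}$ the initial value problem \eqref{eq:wave-ini} has a unique classical solution $p$, which is smooth on $\R^d \times [0,\infty)$ because the initial data are smooth and compactly supported; the map $h \mapsto p$ is linear since the solution depends linearly on its initial data. Hence the trace $p|_{\La \times \skl{0,\infty}}$ is a well-defined smooth function, and because $w_{\Om,\La}$ is compactly supported in $\skl{0,\infty}$, the product $w_{\Om,\La}\,p|_{\La\times\skl{0,\infty}}$ from \eqref{eq:solW} has compact support in $\La \times \skl{0,\infty}$. In particular $\Wo_{\Om,\La} h \in L^2\kl{\La \times \skl{0,\infty}}$, so $\Wo_{\Om,\La}$ is a well-defined linear operator on $C^\infty_\Om\skl{\R^d}$.

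The core step is the inequality $\snorm{\Wo_{\Om,\La} h}_{L^2\skl{\La\times\skl{0,\infty}}} \leq c_{\Om,\La}\,\snorm{h}_{L^2_\Om\skl{\R^d}}$, for which I would invoke the trace estimate of \cite{Pal10}. This estimate controls the space-time $L^2$ norm of the boundary trace of the solution of \eqref{eq:wave-ini} by the $L^2$ norm of the initial pressure. Two geometric features make the $L^2 \to L^2$ bound, with no loss of derivatives, both meaningful and available. First, the observation surface $\La$ lies at positive distance $\dist(\Om,\La)$ from the support of $h$, so $p$ is smooth in a neighborhood of $\La$ and no boundary singularity enters the trace. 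Second, the cutoff $w_{\Om,\La}$ confines the time variable to a bounded interval; this both reflects the finite measurement window and, in the two-dimensional case where the solution fails to have compact support in time, makes the finiteness of the space-time $L^2$ norm immediate. The resulting constant $c_{\Om,\La}$ then depends only on the geometry through $\diam(U)$ and $\dist(\Om,\La)$.

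Finally I would obtain the extension by a standard density argument. The space $C^\infty_\Om\skl{\R^d}$ is dense in $L^2_\Om\skl{\R^d}$, since $C^\infty_c(\Om) \subset C^\infty_\Om\skl{\R^d}$ and $C^\infty_c(\Om)$, extended by zero, is already dense in $L^2(\Om) \cong L^2_\Om\skl{\R^d}$. As $\Wo_{\Om,\La}$ is bounded and linear on this dense subspace with values in the complete space $L^2\kl{\La\times\skl{0,\infty}}$, the extension theorem for bounded linear operators yields a unique bounded linear operator $\overline\Wo_{\Om,\La}$ on all of $L^2_\Om\skl{\R^d}$ agreeing with $\Wo_{\Om,\La}$ on $C^\infty_\Om\skl{\R^d}$ and having the same operator norm $c_{\Om,\La}$. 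The main obstacle is contained entirely in the trace estimate: the wave propagator $\cos(t\sqrt{-\Delta})$ neither smooths nor naively affords a trace without losing half a derivative, so the $L^2 \to L^2$ control is a genuine hidden-regularity-type gain from the time integration; this is exactly what \cite{Pal10} supplies, whereas the existence of the smooth solution, the finiteness of the trace, and the density and extension steps are all routine.
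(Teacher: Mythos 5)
Your overall skeleton --- a key estimate attributed to \cite{Pal10} followed by the routine density/extension argument --- matches the paper's, and your well-definedness, density, and extension steps are all fine. The genuine gap is in the core step: \cite{Pal10} does not supply a ready-made space-time $L^2$ trace estimate for solutions of the wave equation \eqref{eq:wave-ini}. What it supplies is a Sobolev smoothing estimate for the \emph{spherical mean Radon transform} $\Mo h\kl{y,t} = \tfrac{1}{\omega_{d-1}}\int_{\sph^{d-1}} h\skl{y+t\omega}\,\rmd\omega$, namely $\snorm{\Mo h}_{H^{\lambda+(d-1)/2}(\partial U\times\skl{0,\infty})} \leq c\,\snorm{h}_{H^{\lambda}(U)}$ for every $\lambda\in\R$. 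The paper's proof must therefore bridge this to the wave operator, and this bridge is the actual mathematical content: after reducing to $\La=\partial U$, one writes $\Wo_{\Om,\La}h$ through the explicit solution formulas, $\kl{\Wo_{\Om,\La}h}\kl{y,t} = w_{\Om,\La}(t)\,\partial_t\kl{t\,\Mo h\kl{y,t}}$ in three dimensions and $w_{\Om,\La}(t)\,\partial_t\int_0^t r\,\Mo h\kl{y,r}/\sqrt{t^2-r^2}\,\rmd r$ in two dimensions, and then counts derivatives: $\Mo$ gains $(d-1)/2$ derivatives (apply the estimate with $\lambda=0$), the time derivative loses one, and in two dimensions the Abel integral gains an additional $1/2$, so the net order is exactly zero and the $L^2\to L^2$ bound follows.

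Your proposal replaces this reduction and bookkeeping by the assertion that the trace estimate itself is what the reference proves; as stated, that is not the case, so the proof is incomplete at precisely the nontrivial point. Your closing heuristic is correct in spirit --- the bound is indeed a hidden-regularity-type gain, and the $(d-1)/2$ derivative gain in the spherical mean estimate is exactly what compensates the loss from $\partial_t$ --- but turning that heuristic into a proof requires the explicit representation of the wave solution via spherical means, which is the step your argument is missing. A secondary inaccuracy: the positive distance $\dist(\Om,\La)$ and the cutoff $w_{\Om,\La}$ play no role in obtaining the constant beyond making the time window finite; the estimate of \cite{Pal10} is a global Sobolev bound on $\partial U\times\skl{0,\infty}$, not a consequence of the trace being taken away from the singular support.
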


\begin{proof}
It is sufficient to consider the case when the data are measured on the whole boundary
$\La  = \partial U$.
The well known explicit  formulas for the  solution of  \eqref{eq:wave-ini} in two and three spatial dimensions (see, for example, \cite{Eva98,Joh82}) imply that for every
$(y,t) \in \partial U  \times\skl{0,\infty}$ we have
\begin{equation}\label{eq:sm}
    \kl{\Wo_{\Om,\La} h}  \kl{y,t} =
     \begin{cases}
  \frac{ w_{\Om,\La}(t)}{2\pi}
  \, \partial_t
     \int_0^t \frac{r}{\sqrt{t^2-r^2}}\int_{\sph^{d-1}}  h \kl{y + r \omega}
     \rmd \omega \rmd r
     & \text{ for } d = 2
\\
      \frac{w_{\Om,\La}(t)}{4\pi}\,
      \partial_t \kl{t \int_{\sph^{d-1}} h \kl{y +  t \omega} \rmd \omega}
     & \text{ for } d = 3 \,.
     \end{cases}
\end{equation}
We define the  spherical mean Radon transform
$\Mo \colon C^\infty_\Om \skl{\R^d} \to C^\infty(\partial U \times\skl{0,\infty})$ by
$\Mo h \kl{y,t} \coloneqq  1/\omega_{d-1} \int_{\sph^{d-1}} h \skl{y +  t \omega} \rmd \omega
$ for $ (y,t) \in \partial U \times\skl{0,\infty} $.
With the spherical mean Radon transform, the wave operator can be written as
$\kl{\Wo_{\Om,\La} h}  \kl{y,t} =
     w_{\Om,\La}(t) \,    \partial_t
     \int_0^t r  \Mo h \kl{y,r}/\sqrt{t^2-r^2}
       \rmd r$ in the case of two spatial dimensions   and
  $  \kl{\Wo_{\Om,\La} h}  \kl{y,t} =w_{\Om,\La}(t) \,  \partial_t  \kl{t \Mo h}  \kl{y,t}$ in the three dimensional case.

Next we use a Sobolev estimate derived in  \cite{Pal10}, which states that for every $\al \in \R$ there exists a constant $c_{K, \lambda}$ such that    $\snorm{\Mo h}_{H^{\lambda+(d-1)/2}(\partial U \times\skl{0,\infty})}  \leq c_{K, \lambda}
\norm{h}_{H^\lambda(U)} $ for any $h \in C^\infty_\Om \skl{\R^d}$.
Application of this identity with $\al = 0$ and using the smoothing properties of the Abel transform by degree $1/2$ for the case of two spatial dimensions  yields the  continuity of $\Wo_{\Om,\La}$ with respect to the $L^2$ topologies. In particular, $\Wo_{\Om,\La}$ has a unique bounded linear extension to
$L^2_\Om \skl{\R^d}$.
\end{proof}

For solving the inverse problem of qPAT we
will further utilize an explicit expression for the adjoint of $\Wo_{\Om,\La}$,
that we compute next.

\begin{proposition}[Adjoint of the wave operator]
For $v \in L^2 \skl{\La\times\skl{0,\infty}} \cap  C^1 \skl{\La\times\skl{0,\infty}}$,
and every $x \in \bar \Om$, we have
\begin{equation} \label{eq:wave-ad}
    \kl{\Wo_{\Om,\La}^* v}\kl{x}
    =
      \begin{cases}
      \displaystyle
      -\frac{1}{2\pi}
    \int_{\La}
     \int_{\sabs{x-y}}^\infty
    \frac{\partial_t \skl{w_{\Om,\La} v} \kl{y,t}}
    {\sqrt{r^2 - \sabs{x-y}^2}}   \, \rmd r  \, \rmd S(y)
    & \text{ if } d = 2
    \\
    \displaystyle
    -\frac{1}{4\pi}
    \int_{\La} \frac{ \partial_t ( w_{\Om,\La} v)\kl{y,\abs{x-y}}}{\abs{x-y}} \, \rmd S(y)
    & \text{ if } d = 3
     \,.
\end{cases}
\end{equation}
\end{proposition}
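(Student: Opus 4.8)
The plan is to obtain $\Wo_{\Om,\La}^*$ directly from the defining identity $\sinner{\Wo_{\Om,\La}h}{v}_{L^2\skl{\La\times\skl{0,\infty}}} = \sinner{h}{\Wo_{\Om,\La}^* v}_{L^2_\Om\skl{\R^d}}$, evaluated for $h \in C^\infty_\Om\skl{\R^d}$, on which $\Wo_{\Om,\La}$ is given by the explicit spherical-mean representations established in the proof of Theorem~\ref{thm:wave}. The strategy is to start from $\int_\La\int_0^\infty \skl{\Wo_{\Om,\La}h}(y,t)\,v(y,t)\,\rmd t\,\rmd S(y)$ and to transfer every operation away from $h$ and onto $w_{\Om,\La}v$, until the integrand is displayed as $h(x)$ multiplied by an explicit kernel; reading off that kernel gives the two claimed formulas.

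I would treat $d=3$ first, where $\skl{\Wo_{\Om,\La}h}(y,t)=w_{\Om,\La}(t)\,\partial_t\skl{t\,\Mo h}(y,t)$. Integrating by parts in $t$ moves the derivative onto $w_{\Om,\La}v$; the boundary contributions vanish because $t\,\Mo h(y,t)\to 0$ as $t\downarrow 0$ and $w_{\Om,\La}$ is compactly supported. This leaves $-\int_\La\int_0^\infty \partial_t\skl{w_{\Om,\La}v}(y,t)\,t\,\Mo h(y,t)\,\rmd t\,\rmd S(y)$. Writing $\Mo h(y,t)=\tfrac{1}{4\pi}\int_{\sph^{d-1}}h(y+t\omega)\,\rmd\omega$ and passing to spherical coordinates $x=y+t\omega$ centred at $y$, for which $\rmd x=t^{2}\,\rmd t\,\rmd\omega$ and $t=\sabs{x-y}$, the factor $t$ combines with the Jacobian $t^2$ to produce a factor $t^{-1}=\sabs{x-y}^{-1}$ and the time argument is replaced by $\sabs{x-y}$, turning the inner integral into $\tfrac{1}{4\pi}\int_{\R^d}h(x)\,\partial_t\skl{w_{\Om,\La}v}(y,\sabs{x-y})\,\sabs{x-y}^{-1}\,\rmd x$. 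A final interchange of the $x$ and $y$ integrations then exhibits the kernel and yields the stated $d=3$ expression, the integral over $\R^d$ reducing to one over $\bar\Om$ since $h$ is supported there.

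For $d=2$, where $\skl{\Wo_{\Om,\La}h}(y,t)=w_{\Om,\La}(t)\,\partial_t\int_0^t r\,\Mo h(y,r)/\sqrt{t^2-r^2}\,\rmd r$, the same integration by parts gives $-\int_\La\int_0^\infty \partial_t\skl{w_{\Om,\La}v}(y,t)\int_0^t r\,\Mo h(y,r)/\sqrt{t^2-r^2}\,\rmd r\,\rmd t\,\rmd S(y)$. An additional Fubini step now swaps the $t$ and $r$ integrations over the triangle $0<r<t$, so that for fixed $r$ the time variable runs over $(r,\infty)$ and the Abel-type inner integral $\int_r^\infty \partial_t\skl{w_{\Om,\La}v}(y,t)/\sqrt{t^2-r^2}\,\rmd t$ appears. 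Inserting $\Mo h(y,r)=\tfrac{1}{2\pi}\int_{\sph^{d-1}}h(y+r\omega)\,\rmd\omega$ and changing to polar coordinates $x=y+r\omega$, with $\rmd x=r\,\rmd r\,\rmd\omega$ and $r=\sabs{x-y}$, the radial factor $r$ is now fully absorbed by the Jacobian, leaving no $\sabs{x-y}^{-1}$ factor, which accounts for the structural difference between the two cases. One last interchange of integration then produces the stated $d=2$ expression, with the dummy time variable renamed $r$.

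The actual content is not in performing these manipulations but in \emph{justifying} them. Each interchange rests on Fubini's theorem and hence on absolute integrability, which is guaranteed by three facts: $w_{\Om,\La}$ is compactly supported, so all time integrals are over a bounded interval; $v\in C^1$ makes $\partial_t\skl{w_{\Om,\La}v}=w_{\Om,\La}'v+w_{\Om,\La}\partial_t v$ continuous; and $h\in C^\infty_\Om\skl{\R^d}$ confines every spatial integral to the compact set $\bar\Om$. The singular weights are harmless, as $1/\sqrt{t^2-r^2}$ is an integrable Abel kernel up to its endpoint and $1/\sabs{x-y}$ is locally integrable in $\R^2$ and $\R^3$, so the coordinate changes and the integration by parts are all legitimate. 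Finally, since $C^\infty_\Om\skl{\R^d}$ is dense in $L^2_\Om\skl{\R^d}$ and $\Wo_{\Om,\La}$ is bounded by Theorem~\ref{thm:wave}, the identity established on this dense subspace determines $\Wo_{\Om,\La}^*v$ on all of $L^2_\Om\skl{\R^d}$, so that the computed kernel is indeed the adjoint.
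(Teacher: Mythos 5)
Your proposal is correct and takes essentially the same route as the paper: the paper's proof is a one-line remark that the result follows from Fubini's theorem and the explicit expression \eqref{eq:sm}, and your argument (integration by parts in time, passage to polar/spherical coordinates about $y$, and interchange of the $x$- and $y$-integrations, all justified by the compact supports of $w_{\Om,\La}$ and $h$ and the integrability of the Abel and $\sabs{x-y}^{-1}$ kernels) is precisely the detailed execution of that remark, including the concluding density argument. As a minor point, your computation also clarifies that in the two-dimensional formula the inner integration variable is the time variable (written $r$ in \eqref{eq:wave-ad}), i.e.\ the integrand should be read as $\partial_t\skl{w_{\Om,\La}v}(y,r)/\sqrt{r^2-\sabs{x-y}^2}$.
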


\begin{proof}
This is a simple application of Fubini's theorem and the explicit expression for $\Wo_{\Om,\La} h$
given in~\eqref{eq:sm}.
\end{proof}

\section{Single-stage approach to qPAT}
\label{sec:inv}

In this section we solve the inverse problem of qPAT by a single-stage approach.
Our setting allows acoustic measurement for multiple sources. Such a strategy has been called multi-source qPAT or multiple illumination qPAT (see \cite{BalRen11,CoxTarArr11,Zem10}).
For that purpose, throughout this section  $q_i \in L^\infty\skl{\Om \times \sph^{d-1}}$ and  $f_i \in L^\infty\kl{\Gamma_- , \sabs{\nu \cdot \theta}}$, for $i = 1, \dots N$, denote given source patterns and {\blau boundary light sources},  respectively. Recall that  $\Om \subset \R^d$ denotes a  bounded
convex domain with Lipschitz boundary and $\Gamma_-$ denotes the inflow
boundary consisting of all pairs $(x, \theta) \in \partial \Om \times
\sph^{d-1}$ with $\nu(x) \cdot \theta  < 0$.

\psfrag{L}{$\La$}
\psfrag{U}{$U$}
\psfrag{O}{$\Om$}
\begin{figure}[htb!]
\floatbox[{\capbeside\thisfloatsetup{capbesideposition={right,bottom},capbesidewidth=0.5\textwidth}}]{figure}[\FBwidth]
{\caption{\textsc{Setup for single-stage qPAT.} The stationary RTE governs the light propagation
in the domain $\Om$. the absorption of photons induces an initial pressure wave proportional to the heating $\Ho_i(\mu, \sigma)$. Further, $\bar\Om$ is supposed to be contained in another domain $U$, and the pressure waves are measured with acoustic detectors located on a open subset $\La \subset \partial U$ of the boundary of $U$.}\label{fig:geometry}}
{ \includegraphics[width=0.35\textwidth]{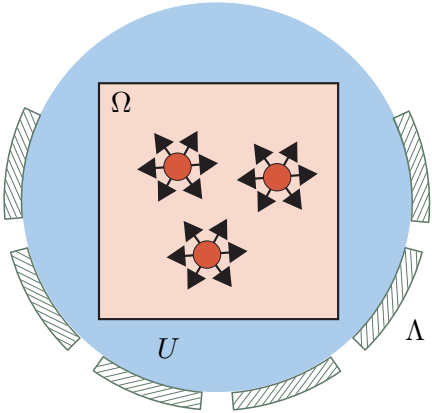}\hspace{0.1\textwidth}}
\end{figure}

To indicate the dependence of the solution of the stationary RTE
on the pair $(q_i, f_i)$ we write $\Fo_i \colon \dom_2 \to L^2\skl{\Om \times \sph^{d-1}}$
for the solution operator of the  stationary RTE \eqref{eq:srt}, \eqref{eq:srt-b}
with sources $(q_i, f_i)$. Here $\dom_2$ is the set of all
admissible pairs $(\mu,\sigma)$ defined in \eqref{eq:dp}.
Further,  we use
\begin{equation*}
\Ho_i \colon \dom_2
\to L^2(\Om) \colon \kl{\mu,\sigma}
 \mapsto  \mu\int_{\sph^{d-1}}
\Fo_i \skl{\mu, \sigma}(\edot,\theta)\rmd  \theta
\end{equation*}
to denote the corresponding operator describing the heating. For the coupling to the
acoustic problem,
it will be convenient to consider $\Ho_i(\mu, \sigma) \in L^2(\Om)$ as an element of $L^2_\Om(\R^d)$,
by extending it to a function defined on $\R^d$ that  is equal to zero on  $\R^d \setminus \Om$.

Further, recall the Definition \ref{def:W} of the wave operator $\Wo_{\Om,\La}$ modeling the acoustic problem, that maps the initial pressure $h$ in the wave equation \eqref{eq:wave-ini}  to its  solution
restricted  to  $\La \subset \partial U$. Here  $U$ is a convex domain with smooth boundary that contains the support of $f$.
To apply the results of Section~\ref{sec:forward}, in the following we assume that  $\bar \Om  \subset U$.
Then, according to the Section~\ref{sec:forward},  the operators $\Ho_i$ are Lipschitz continuous, weakly continuous and one-sided directional differentiable, and $\Wo_{\Om,\La}$ is  linear and bounded.
A practical representation of these domains is illustrated in Figure~\ref{fig:geometry}.

\subsection{Formulation as operator equation}
\label{sec:pat-equation}

In order to apply standard techniques for the solution of inverse problems
we  write the reconstruction problem of (multiple-source) qPAT as a single operator equation.
For that propose we denote by
\begin{equation*}
\begin{aligned}
 	\Fqpat \colon \dom_2
	& \to \kl{L^2 \skl{\La\times\skl{0,\infty}}}^N
	\\
	 (\mu, \sigma )
	 &\mapsto \kl{ \Wo_{\Om,\La} \circ \Ho_1 (\mu, \sigma ),
	\dots, \Wo_{\Om,\La} \circ \Ho_N (\mu, \sigma )}
\end{aligned}
\end{equation*}
the operator describing  the entire forward  problem of qPAT.
Further we denote by $\snorm{v}_N^2 \coloneqq \sum_{i=1}^N \snorm{v_i}^2_{L^2 \skl{\La\times\skl{0,\infty}}}$ the squared 
norm on  $L^2 \skl{\La\times\skl{0,\infty}}^N$.

\begin{theorem}[Properties of the forward operator of qPAT]\label{thm:prop}\mbox{}
\begin{enumerate}[topsep=0.5em,itemsep=0em,label=(\alph*)]
\item
The operator $\Fqpat$ is sequentially weakly continuous.
\item
The operator $\Fqpat$ is Lipschitz continuous.
\item
For any $(\mu, \sigma) \in \dom_2$, the one-sided directional derivative in any feasible  direction
$h \in \dom_2(\mu, \sigma) $ exists.
Further,
\begin{equation} \label{eq:derG}
\Fqpat' (\mu, \sigma)(h) =
\kl{ \Wo_{\Om,\La} \circ \Ho_1' (\mu, \sigma )(h),
\dots, \Wo_{\Om,\La} \circ \Ho_N' (\mu, \sigma )(h)} \,
\end{equation}
where $\Ho_i' (\mu, \sigma )(h)$ is given by \eqref{eq:derH} with $\Fo$ replaced by $\Fo_i$.
\item If $0 < \mu \leq \overline\mu$ and $0 < \sigma < \overline\sigma$, then $\Fqpat$ is G\^ateaux differentiable at $(\mu, \sigma)$.
\end{enumerate}
\end{theorem}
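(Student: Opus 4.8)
The plan is to obtain all four assertions directly from the properties of the heating operators $\Ho_i$ established in Theorems~\ref{thm:heating} and \ref{thm:Hdiff} together with the linearity and boundedness of the wave operator $\Wo_{\Om,\La}$ from Theorem~\ref{thm:wave}, using only two elementary facts: that a bounded linear operator is Lipschitz, sequentially weakly continuous, and commutes with limits, and that weak convergence (respectively a Lipschitz estimate) in the product space $L^2(\La\times(0,\infty))^N$ equipped with $\snorm{\edot}_N$ is equivalent to (respectively follows from) the corresponding componentwise statement. As a preliminary remark I would note that the zero-extension $L^2(\Om)\hookrightarrow L^2_\Om(\R^d)$ is an isometric, hence bounded linear, embedding, so all continuity and differentiability properties of $\Ho_i$ carry over unchanged when $\Ho_i$ is regarded as mapping into $L^2_\Om(\R^d)$, the domain of $\Wo_{\Om,\La}$.

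For (a) and (b) I would first treat each component $\Wo_{\Om,\La}\circ\Ho_i$. Since $\Ho_i$ is sequentially weakly continuous for $p=2$ by Theorem~\ref{thm:heating}\ref{it:heating-b} and $\Wo_{\Om,\La}$ is bounded linear, hence maps weakly convergent sequences to weakly convergent sequences, the composition $\Wo_{\Om,\La}\circ\Ho_i$ is sequentially weakly continuous; likewise, composing the Lipschitz map $\Ho_i$ (Theorem~\ref{thm:heating}\ref{it:heating-a}) with the Lipschitz map $\Wo_{\Om,\La}$ gives a Lipschitz component. Assembling the $N$ components, part (a) follows because a sequence converges weakly in $L^2(\La\times(0,\infty))^N$ if and only if each of its components does, and part (b) follows from $\snorm{\Fqpat(\mu,\sigma)-\Fqpat(\hat\mu,\hat\sigma)}_N^2=\sum_{i=1}^N\snorm{\Wo_{\Om,\La}\Ho_i(\mu,\sigma)-\Wo_{\Om,\La}\Ho_i(\hat\mu,\hat\sigma)}^2$, which is bounded by $\sum_i L_i^2$ times the squared distance of the arguments, where $L_i$ is the Lipschitz constant of the $i$-th component.

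For (c) I would fix $(\mu,\sigma)\in\dom_2$ and a feasible direction $h\in\dom_2(\mu,\sigma)$. By Theorem~\ref{thm:Hdiff} the one-sided directional derivative $\Ho_i'(\mu,\sigma)(h)$ exists, and since $\Wo_{\Om,\La}$ is bounded linear I may pull it through the difference quotient and the limit to obtain $\lim_{s\downarrow 0} s^{-1}(\Wo_{\Om,\La}\Ho_i((\mu,\sigma)+sh)-\Wo_{\Om,\La}\Ho_i(\mu,\sigma))=\Wo_{\Om,\La}\Ho_i'(\mu,\sigma)(h)$; doing this componentwise yields the existence of $\Fqpat'(\mu,\sigma)(h)$ and formula~\eqref{eq:derG}, each $\Ho_i'(\mu,\sigma)(h)$ being given by~\eqref{eq:derH} through the solution of the corresponding sensitivity equation~\eqref{eq:der} with $\Fo$ replaced by $\Fo_i$. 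For (d), under the conditions of the statement each $\Ho_i$ is G\^ateaux differentiable by Theorem~\ref{thm:Hdiff}, the feasible-direction set being all of $L^2(\Om)\times L^2(\Om\times\sph^{d-1})$; composing the bounded linear map $h\mapsto\Ho_i'(\mu,\sigma)(h)$ with $\Wo_{\Om,\La}$ and stacking the $N$ results gives a bounded linear map $h\mapsto\Fqpat'(\mu,\sigma)(h)$, which is precisely G\^ateaux differentiability of $\Fqpat$.

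Since every step reduces to already-established properties of $\Ho_i$ and $\Wo_{\Om,\La}$, there is no genuine analytic difficulty, and the only points requiring care are bookkeeping ones. The most error-prone step is the interchange of $\Wo_{\Om,\La}$ with the directional-derivative limit in (c): it is justified purely by continuity of the bounded linear operator $\Wo_{\Om,\La}$, but one must keep in mind that the difference quotients $s^{-1}(\Ho_i((\mu,\sigma)+sh)-\Ho_i(\mu,\sigma))$ actually converge in the norm of $L^2_\Om(\R^d)$, as guaranteed by Theorem~\ref{thm:Hdiff}, so that continuity indeed transfers the limit through $\Wo_{\Om,\La}$. The second thing to watch is the product-space norm $\snorm{\edot}_N$, so that the Lipschitz estimate and the weak-continuity characterization are applied to all $N$ components simultaneously.
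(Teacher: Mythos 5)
Your proposal is correct and follows exactly the paper's own route: the paper's proof consists of the single observation that all claims follow from the properties of the $\Ho_i$ (Theorems~\ref{thm:heating} and~\ref{thm:Hdiff}) together with the boundedness and linearity of $\Wo_{\Om,\La}$ (Theorem~\ref{thm:wave}), which is precisely the componentwise composition argument you spell out. Your additional bookkeeping---the isometric zero-extension $L^2(\Om)\hookrightarrow L^2_\Om(\R^d)$, the equivalence of weak convergence in the product space with componentwise weak convergence, and the norm-limit justification for pulling $\Wo_{\Om,\La}$ through the difference quotient---is exactly the detail the paper leaves implicit.
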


\begin{proof}
All claims follow from the corresponding properties of
the operators $\Ho_i $ (see Theorems~\ref{thm:heating} and~ \ref{thm:Hdiff})
and the boundedness of $\Wo_{\Om,\La}$ discussed in Theorem \ref{thm:wave}.
\end{proof}

The inverse problem of qPAT  with multiple illuminations
consists in solving the nonlinear equation
\begin{equation} \label{eq:ip3}
 	v   = \Fqpat \skl{ \mu^\star, \sigma^\star}
	+ z  \,,
\end{equation}
where $\skl{ \mu^\star, \sigma^\star}$ is the unknown, $v= (v_1 , \dots,  v_N )$
are the given noisy data, $\Fqpat \colon \dom_2 \to   L^2 \skl{\La\times\skl{0,\infty}}^N$  is the forward operator, and  $z$ is the noise in the data. Our single-stage approach for qPAT consists in
estimating the parameter pair $\skl{ \mu^\star, \sigma^\star}$ directly from \eqref{eq:ip3}.
In contrast, existing two-stage approaches for qPAT first construct estimates
$h_i$ for the heating functions $\Ho_i(\mu^\star, \sigma^\star) $ from data $v_i$ by
numerically inverting $\Wo_{\Om,\La}$, and subsequently solve
$(h_1, \dots h_N)  =  (\Ho_1 (\mu, \sigma ), \dots,  \Ho_N (\mu, \sigma ))$
for $(\mu, \sigma )$.
	
There are at least two common methods for tackling an inverse problem of  the form \eqref{eq:ip3}: Tikhonov type regularization methods  on the one
and iterative regularization methods on the other hand. In the following we apply Tikhonov regularization
to the inverse problem of qPAT.

\subsection{Tikhonov regularization for single-stage qPAT}
\label{sec:tikhonov}

We address the inverse problem \eqref{eq:ip3} by  Tikhonov regularization with general convex penalty.
For that purpose, let $
	\rfun \colon L^2 \kl{\Om} \times L^2 \kl{\Om\times \sph^{d-1}}
	\to \R \cup \set{\infty} $
be a convex, and lower semicontinuous functional with domain $\dom (\rfun) \coloneqq \sphet{(\mu, \sigma ) \in L^2 \kl{\Om} \times L^2 \kl{\Om\times \sph^{d-1}}\colon \rfun(\mu, \sigma )  < \infty}$.
We assume that $\rfun$ is chosen such that  $\dom_2 \cap \dom (\rfun)$ is  non-empty.

Tikhonov regularization  with penalty $\rfun$ consists  in
computing  a minimizer of the generalized Tikhonov functional
\begin{equation}\label{eq:tik}
\begin{aligned}
\tfun_{v,\lambda} \colon L^2 \kl{\Om} \times L^2 \skl{\Om\times \sph^{d-1}}
&\to \R \cup \set{\infty}
\\
(\mu, \sigma)
&\mapsto
	\begin{cases}
	\frac{1}{2}
	 \norm{\Fqpat( \mu, \sigma) - v}_N^2
	+ \lambda \rfun ( \mu, \sigma)
	& \text{ if }
	( \mu, \sigma) \in \dom_2 \cap \dom (\rfun)
	\\
	\infty & \text{ otherwise}\,.
	\end{cases}
\end{aligned}
\end{equation}
Here  $\lambda > 0$ is the so called regularization parameter which has to be
chosen accordingly, to balance between stability with respect to noise and accuracy in the case of exact data.
The data-fidelity term $\frac{1}{2} \snorm{\Fqpat( \mu, \sigma) - v}_N^2$
guarantees that any minimizer of \eqref{eq:tik} predicts the given data sufficiently well.
 The regularization term  $\lambda \rfun ( \mu, \sigma)$ on the other hand avoids
 over-fitting of the data and makes the reconstruction process well-defined and stable.

Note that Tikhonov regularization with penalty $\rfun$ is designed to stably
approximate a solution of the constrained optimization
problem
\begin{equation}\label{eq:Rmin}
\rfun (\mu, \sigma) \to \min_{(\mu, \sigma)\in \dom_2 \cap \dom (\rfun)}
\quad \text{ such that }
\Fqpat( \mu, \sigma) =  v^\star \,.
\end{equation}
Here $v^\star \in \ran(\Fqpat)$  is an element  in the
range of $\Fqpat$ and is referred to as exact data.
Any solution of \eqref{eq:Rmin} is called
$\rfun$-minimizing solution of the equation
$\Fqpat( \mu, \sigma) = v^\star$. Under the given assumptions there exists at
least one $\rfun$-minimizing solution, which however is not necessarily unique,
see \cite{SchGraGroHalLen09,SchKalHofKaz12}.

The properties of the operator $\Fqpat$ derived above and the use of general results from regularization
theory yield the  following result.

\begin{theorem}[Well-posedness and convergence of Tikhonov regularization]\label{thm:tik}\mbox{}
\begin{enumerate}[topsep=0.5em,itemsep=0em,label=(\alph*)]
\item
For data $v  \in L^2 \skl{\La\times\skl{0,\infty}}^N$  and every $\al >0$,
the Tikhonov functional  $\tfun_{v,\lambda}$
has at least one minimizer.

\item
Let $\al>0$, $v \in L^2 \skl{\La\times\skl{0,\infty}}^N$, and let $(v_n)_{n \in \N}$ be a sequence in $L^2 \skl{\La\times\skl{0,\infty}}^N$ with $\snorm{v - v_n}_N \to 0$.
Then  every  sequence of minimizers $(\mu_n, \sigma_n) \in \argmin \tfun_{v_n, \lambda}$  has a {\blau weakly} convergent subsequence.
Further, the limit $u$ of every weekly convergent subsequence
$(\mu_{\tau(n)}, \sigma_{\tau(n)})_{n\in \N}$  is a minimizer
$(\mu, \sigma)$ of $\tfun_{\lambda,v}$ and  satisfies ${\blau \rfun}(\mu_{\tau(n)}, \sigma_{\tau(n)}) \to \rfun(\mu, \sigma)$ for $n \to \infty$.

\item
Let $v^\star \in \ran(\Fqpat)$, let  $\kl{\delta_n}_{n\in \N} \subset (0, \infty)$ be a sequence converging to zero, and let $(v_n)_{n\in \N} \subset V$ be a sequence of data with  $\norm{v^\star - v_n}_N \leq \delta_n$.
Suppose further that $(\lambda_n)_{n\in \N} \subset (0, \infty)$
satisfies    $\al_n \to 0$ and $\delta_n^2/\al_n \to  0$ as $n \to \infty$.
Then the following hold:
\begin{itemize}[topsep=0em,itemsep=0em]
\item Every sequence $(\mu_n, \sigma_n) \in  \argmin \tfun_{v_k,\lambda_k}$ has a weakly converging subsequence.
\item
The limit  of every weakly  convergent subsequence
$(\mu_{\tau(n)},\sigma_{\tau(n)})_{n\in \N}$ of $(\mu_n, \sigma_n)_{n\in \N}$
is an $\rfun$-minimizing solution $(\mu^\star,\sigma^\star)$ of  $\Fqpat( \mu, \sigma) = v^\star$ and   satisfies $\rfun(\mu_{\tau(n)}, \sigma_{\tau(n)}) \to \rfun(\mu^\star,\sigma^\star)$.
\item If the $\rfun$-minimizing solution of $\Fqpat( \mu, \sigma) = v^\star$
is unique, then $(\mu_n, \sigma_n) \rightharpoonup (\mu^\star,\sigma^\star)$.
\end{itemize}
\end{enumerate}
\end{theorem}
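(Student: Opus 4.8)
The plan is to verify that single-stage qPAT fits the standard framework for Tikhonov regularization of nonlinear inverse problems with convex penalties, so that all three assertions follow from the general theory in \cite{SchGraGroHalLen09,SchKalHofKaz12}. Three structural ingredients are needed, all already established. First, by \eqref{eq:dp} the admissible set $\dom_2$ is a closed, bounded, convex subset of the Hilbert space $L^2(\Om)\times L^2(\Om\times\sph^{d-1})$; being convex and closed it is weakly sequentially closed, and being bounded it renders the customary coercivity hypothesis on $\rfun$ unnecessary, since any sequence keeping $\tfun_{v,\lambda}$ finite already lies in $\dom_2$ and, by reflexivity of the $L^2$ spaces, admits a weakly convergent subsequence whose limit lies in $\dom_2$. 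Second, $\Fqpat$ is sequentially weakly continuous by Theorem~\ref{thm:prop}; as the norm on $L^2(\La\times(0,\infty))^N$ is weakly lower semicontinuous, the fidelity term $(\mu,\sigma)\mapsto \tfrac12\snorm{\Fqpat(\mu,\sigma)-v}_N^2$ is weakly lower semicontinuous. Third, $\rfun$ is convex and lower semicontinuous, hence weakly lower semicontinuous. Together these make $\tfun_{v,\lambda}$ weakly sequentially lower semicontinuous on all of $L^2(\Om)\times L^2(\Om\times\sph^{d-1})$.

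Parts (a) and (b) then follow by the direct method. For (a), a minimizing sequence for $\tfun_{v,\lambda}$ lies in $\dom_2\cap\dom(\rfun)\subset\dom_2$; extracting a weakly convergent subsequence with limit in $\dom_2$ and invoking weak lower semicontinuity shows the limit attains the infimum. For (b), minimizers $(\mu_n,\sigma_n)$ of $\tfun_{v_n,\lambda}$ again lie in the bounded set $\dom_2$, so a subsequence converges weakly to some $(\mu,\sigma)\in\dom_2$. The minimality inequality $\tfun_{v_n,\lambda}(\mu_n,\sigma_n)\le\tfun_{v_n,\lambda}(\mu,\sigma)$, continuity of the fidelity term in the data argument, $\snorm{v-v_n}_N\to 0$, and weak lower semicontinuity show $(\mu,\sigma)$ minimizes $\tfun_{v,\lambda}$. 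The convergence $\rfun(\mu_{\tau(n)},\sigma_{\tau(n)})\to\rfun(\mu,\sigma)$ is then obtained by a sandwich argument, the $\liminf$ bound from weak lower semicontinuity and the matching $\limsup$ bound from inserting the limit as a competitor and passing to the limit in the Tikhonov inequality.

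For (c), fix an $\rfun$-minimizing solution $(\mu^\dagger,\sigma^\dagger)$ of $\Fqpat(\mu,\sigma)=v^\star$, which exists and lies in $\dom_2\cap\dom(\rfun)$. Comparing $\tfun_{v_n,\lambda_n}$ at the minimizer with its value at $(\mu^\dagger,\sigma^\dagger)$ and using $\snorm{v^\star-v_n}_N\le\delta_n$ gives $\tfrac12\snorm{\Fqpat(\mu_n,\sigma_n)-v_n}_N^2\le\tfrac12\delta_n^2+\lambda_n\rfun(\mu^\dagger,\sigma^\dagger)$ together with $\rfun(\mu_n,\sigma_n)\le\delta_n^2/(2\lambda_n)+\rfun(\mu^\dagger,\sigma^\dagger)$. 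The assumptions $\lambda_n\to 0$ and $\delta_n^2/\lambda_n\to 0$ force $\Fqpat(\mu_n,\sigma_n)\to v^\star$ strongly and $\limsup_n\rfun(\mu_n,\sigma_n)\le\rfun(\mu^\dagger,\sigma^\dagger)$. Boundedness of $\dom_2$ yields a weakly convergent subsequence with limit $(\mu^\star,\sigma^\star)\in\dom_2$; weak continuity of $\Fqpat$ gives $\Fqpat(\mu^\star,\sigma^\star)=v^\star$, so the limit is a solution, and weak lower semicontinuity of $\rfun$ with the $\limsup$ bound shows it is $\rfun$-minimizing and that the penalty values converge. If the $\rfun$-minimizing solution is unique, a subsequence-of-subsequence argument upgrades this to weak convergence $(\mu_n,\sigma_n)\rightharpoonup(\mu^\star,\sigma^\star)$ of the full sequence.

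I expect the main obstacle to be not any single deep step but the correct handling of the weak topology throughout, in particular ensuring that the fidelity term is genuinely weakly lower semicontinuous. This rests on the sequential weak-to-weak continuity of the composed operator $\Fqpat$ from Theorem~\ref{thm:prop}, where the averaging/compactness structure of the heating operators $\Ho_i$ and the boundedness of the linear wave operator $\Wo_{\Om,\La}$ enter, together with the replacement of the usual coercivity of $\rfun$ by the boundedness of $\dom_2$; everything else is routine regularization-theoretic bookkeeping.
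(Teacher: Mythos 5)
Your proposal is correct and follows essentially the same route as the paper: the paper's proof simply observes that $\Fqpat$ is sequentially weakly continuous (Theorem~\ref{thm:prop}) and that $\dom_2$ is closed and convex, and then invokes the general theory of Tikhonov regularization with convex penalties \cite[Thm.~3.3, Thm.~3.4, Thm.~3.5]{SchGraGroHalLen09}. You verify exactly these hypotheses (correctly noting that boundedness of $\dom_2$ replaces the usual coercivity assumption on $\rfun$) and then unfold the standard direct-method, stability, and convergence arguments that those cited theorems encapsulate, so the mathematical content coincides.
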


\begin{proof}
 Since $\Fqpat$ is sequentially weakly  continuous (see Theorem~\ref{thm:prop}) and
 $\dom_2$ is closed and convex, this  follows from general results of Tikhonov regularization with convex penalties, see for example, \cite[Thm. 3.3, Thm. 3.4, Thm. 3.5]{SchGraGroHalLen09}.
\end{proof}

\subsection{Gradient of the data-fidelity term}
\label{sec:gradient}

For numerically minimizing the Tikhonov functional we will use the gradient
of the  data-fidelity term
\begin{equation} \label{eq:ffun}
 	\ffun(\mu, \sigma)
	\coloneqq
	\frac{1}{2}
	\norm{\Fqpat( \mu, \sigma) - v}_N^2
	=	
	\frac{1}{2}
	\sum_{i=1}^N
	\norm{\Wo_{\Om,\La} \mu \Ao \Fo_i(\mu,\sigma) - v_i}^2_{L^2\skl{\La \times\skl{0,\infty}}}
	\,.
\end{equation}
Recall that $v_i\in {\blau L^2\kl{\La \times\skl{0,\infty}} }$ are the given data,
$\Fo_i$ is the solution operator for the stationary RTE with source patterns $q_i$ and {\blau boundary light sources} $f_i$, $\Ao$ is the averaging operator, and $\Wo_{\Om,\La}$ is the solution operator for the wave equation.

Let $(\mu,\sigma) \in \dom_2$ be some admissible pair of parameters  and let
$(h_\mu,h_\sigma) \mapsto \ffun'(\mu,\sigma)(h_\mu,h_\sigma)$ denote  the one-sided directional derivative
of $\ffun$ at $(\mu,\sigma)$. We define the gradient  $\nabla \ffun (\mu,\sigma)$ of $\ffun$ at $(\mu,\sigma)$ to be any element in $L^2(\Om) \times L^2(\Om\times\mathbb{S}^{d-1})$ satisfying
\begin{equation} \label{eq:grad}
 \inner{\nabla \ffun(\mu,\sigma)}{(h_\mu,h_\sigma)}_{L^2(\Om) \times L^2(\Om\times\mathbb{S}^{d-1})}
 =
 \ffun'(\mu,\sigma)(h_\mu,h_\sigma)  \quad \text{ for }
 (h_\mu,h_\sigma) \in \dom_2(\mu, \sigma)
 \,.
\end{equation}
From Theorem~\ref{thm:prop} and the chain rule, it follows
that $\ffun'(\mu,\sigma)(h_\mu,h_\sigma)$ exists for any feasible direction
$(h_\mu,h_\sigma) \in \dom_2(\mu, \sigma)$.  Further, in the case that
$\mu$ and $\sigma$  are strictly positive, we have $\dom_2(\mu, \sigma) = L^2(\Om) \times L^2(\Om \times \sph^{d-1})$, which implies that
$\nabla \ffun (\mu,\sigma)$ is uniquely defined by \eqref{eq:grad}.

In order to  compute the gradient  we  derive a more explicit expression for
the one-sided directional derivative.

\begin{proposition}[One-sided directional derivative of the  data-fidelity  term]\label{prop:gradient}
Let  $(\mu, \sigma) \in \dom_2$ be an admissible pair of parameters and let
$(h_\mu, h_\sigma) \in   \dom_2\skl{\mu, \sigma}$ be a feasible direction.
Then we have
\begin{multline} \label{eq:gradient}
 \ffun'(\mu,\sigma)(h_\mu,h_\sigma)
 = \sum_{i=1}^N
 \inner{\Ao\Phi_i \Wo_{\Om,\La}^* \left[\Wo_{\Om,\La} \mu \Ao \Fo_i(\mu,\sigma) - v_i\right]- \Ao(\Phi_i\Phiad_i)}
 {\mudel}_{L^2(\Om)}
  \\
  +
 \sum_{i=1}^N \inner{- \Phi_i \Phiad_i + (\Ko\Phi_i) \Phiad_i}
 {\sigdel}_{L^2(\Om\times\mathbb{S}^{d-1})} \,,
\end{multline}
where $\Phi_i := \Fo_i (\mu, \sigma)$,  and  $\Phiad_i$ is  the unique solution of the adjoint equation
\begin{equation}\label{eq:phidag}
\kl{ -\theta \cdot \nabla_x
+
\kl{\mu + \sigma - \sigma\Ko} } \Phiad_i
= \Ao^\ast\mu\Wo_{\Om,\La}^* \left[\Wo_{\Om,\La} \mu \Ao \Phi_i - v_i\right]
\quad\text{ in } \Om \times \sph^{d-1}
\end{equation}
satisfying the zero outflow boundary condition $\Phiad|_{\Gamma_+} = 0$.
\end{proposition}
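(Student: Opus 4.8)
The plan is to apply the chain rule together with the expression for $\Ho_i'$ from Theorem~\ref{thm:Hdiff}, and then to transfer every operator onto the residual by passing to adjoints; the one genuinely nonstandard step is a duality (Green-type) identity for the stationary transport operator, which is where the adjoint state $\Phiad_i$ enters. For each $i$ I write the residual $r_i \coloneqq \Wo_{\Om,\La}\mu\Ao\Fo_i(\mu,\sigma) - v_i$. Since the squared norm is smooth and $\Fqpat$ is one-sided directionally differentiable with derivative \eqref{eq:derG} (Theorem~\ref{thm:prop}), the chain rule gives $\ffun'(\mu,\sigma)(h_\mu,h_\sigma) = \sum_i \inner{r_i}{\Wo_{\Om,\La}\Ho_i'(\mu,\sigma)(h_\mu,h_\sigma)}$. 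Inserting \eqref{eq:derH}, namely $\Ho_i'(\mu,\sigma)(h_\mu,h_\sigma) = h_\mu\Ao\Phi_i + \mu\Ao\Psi_i$ with $\Phi_i = \Fo_i(\mu,\sigma)$ and $\Psi_i = \Fo_i'(\mu,\sigma)(h_\mu,h_\sigma)$ solving \eqref{eq:der}, I split each summand into an $h_\mu$-part and a $\Psi_i$-part.

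Passing $\Wo_{\Om,\La}$ to its adjoint, the first part becomes $\inner{(\Ao\Phi_i)\Wo_{\Om,\La}^* r_i}{h_\mu}_{L^2(\Om)}$, which is exactly the leading contribution in \eqref{eq:gradient}. For the second part I successively move the (self-adjoint) pointwise multiplication by $\mu$ and the adjoint $\Ao^*$ of the averaging operator to obtain $\inner{\Ao^*\mu\Wo_{\Om,\La}^* r_i}{\Psi_i}_{L^2(\Om\times\sph^{d-1})}$. The difficulty is that $\Psi_i$ is only given implicitly through \eqref{eq:der}, so I introduce the adjoint state $\Phiad_i$ as the solution of \eqref{eq:phidag}. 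Its existence and uniqueness follow from Theorem~\ref{thm:exist} after the substitution $\theta \mapsto -\theta$, which exchanges $\Gamma_+$ and $\Gamma_-$ and turns the backward transport operator into a forward one driven by the $L^2$ source $\Ao^*\mu\Wo_{\Om,\La}^* r_i$.

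Recognizing the right-hand side of \eqref{eq:phidag} as this source, I rewrite $\inner{\Ao^*\mu\Wo_{\Om,\La}^* r_i}{\Psi_i} = \inner{(-\theta\cdot\nabla_x + \mu + \sigma - \sigma\Ko)\Phiad_i}{\Psi_i}$ and integrate by parts in the streaming term. Because $k$ is symmetric, $\Ko$ is self-adjoint and the zeroth-order part transfers unchanged; the streaming term produces a boundary integral $\int_\Gamma (\nu\cdot\theta)\Phiad_i\Psi_i$ over $\Gamma = \Gamma_+\cup\Gamma_-$, which vanishes because $\Phiad_i|_{\Gamma_+} = 0$ and $\Psi_i|_{\Gamma_-} = 0$. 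This yields $\inner{\Phiad_i}{(\theta\cdot\nabla_x + \mu + \sigma - \sigma\Ko)\Psi_i}$, and substituting \eqref{eq:der} replaces the last factor by $-(h_\mu + h_\sigma - h_\sigma\Ko)\Phi_i$.

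Finally I expand $-\inner{\Phiad_i}{(h_\mu + h_\sigma - h_\sigma\Ko)\Phi_i}$ into three terms and collect. Using that $h_\mu$ depends only on $x$, the $h_\mu$-term becomes $-\inner{\Ao(\Phi_i\Phiad_i)}{h_\mu}_{L^2(\Om)}$, which combines with the first part to give the full $\mu$-contribution in \eqref{eq:gradient}; the remaining two terms give $\inner{-\Phi_i\Phiad_i + (\Ko\Phi_i)\Phiad_i}{h_\sigma}_{L^2(\Om\times\sph^{d-1})}$, the $\sigma$-contribution. Summing over $i$ completes the proof. I expect the main obstacle to be the rigorous justification of the integration-by-parts identity: one must guarantee that the traces of $\Phiad_i$ and $\Psi_i$ on $\Gamma_\pm$ are well defined (which holds since both lie in $W^2(\Om\times\sph^{d-1})$) and that the boundary term splits correctly over $\Gamma_+$ and $\Gamma_-$ so the boundary conditions annihilate it, possibly via a density argument approximating $W^2$-functions by smooth ones to make the Green identity precise.
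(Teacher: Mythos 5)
Your proposal is correct and follows essentially the same route as the paper's proof: chain rule plus the product structure of $\Ho_i'$, transfer of $\Wo_{\Om,\La}$, $\mu$, and $\Ao$ to their adjoints, introduction of the adjoint state $\Phiad_i$, integration by parts in the streaming term with the boundary integral annihilated by the zero outflow/inflow conditions, self-adjointness of $\Ko$, substitution of \eqref{eq:der}, and collection of the $h_\mu$- and $h_\sigma$-terms. The only additions beyond the paper's argument are welcome refinements rather than a different method, namely your explicit justification of well-posedness of \eqref{eq:phidag} via the substitution $\theta\mapsto-\theta$ and your remark on rigorously justifying the traces in the Green identity.
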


\begin{proof}
Obviously it is sufficient to consider the case $N=1$, where we write
$v$, $\Fo$, $\Phi$ and  $\Phiad$  in place of $v_i$, $\Fo_i$, $\Phi_i$ and $\Phiad_i$.
By  \eqref{eq:ffun} we have
\begin{align}
 &\nonumber\ffun'(\mu,\sigma)(h_\mu,h_\sigma)
 \\
 &\nonumber\hspace{1cm}
 =\inner{\Wo_{\Om,\La} \mu\Ao\Fo(\mu,\sigma)-v}{\Wo_{\Om,\La} \mudel\Ao\Fo(\mu,\sigma)+\Wo_{\Om,\La} \mu\Ao\Fo'(\mu,\sigma)(\mudel,\sigdel)}_{L^2\skl{\La \times\skl{0,\infty}}}
 \\
 & \nonumber\hspace{1cm}
 =\inner{\Wo_{\Om,\La}  \mu\Ao\Phi-v}{\Wo_{\Om,\La} \mudel\Ao\Phi}_{L^2\skl{\La \times\skl{0,\infty}}}
 \\
 & \nonumber\hspace{3cm}+ \inner{\Wo_{\Om,\La} \mu\Ao\Phi-v}{\Wo_{\Om,\La} \mu\Ao\Fo'(\mu,\sigma)(\mudel,\sigdel)}_{L^2\skl{\La \times\skl{0,\infty}}}
 \\
 & \nonumber\hspace{1cm}
 = \inner{ \Ao\Phi \Wo_{\Om,\La}^* \left[\Wo_{\Om,\La}  \mu\Ao\Phi-v\right]}
 {\mudel}_{L^2(\Om)}
 \\
 & \label{eq:term12}\hspace{3cm}+
 \inner{\Ao^\ast\mu\Wo_{\Om,\La}^*\left[\Wo_{\Om,\La}  \mu\Ao\Phi-v\right]}
 {\Fo'(\mu,\sigma)(\mudel,\sigdel)}_{L^2(\Om \times \sph^{d-1})}\,.
\end{align}

Recall  that  $\Phiad$ is the solution of~\eqref{eq:phidag} with source term
$q = \Ao^\ast\mu\Wo_{\Om,\La}^* \left[\Wo_{\Om,\La} \mu \Ao \Phi - v\right]$
and  zero outflow boundary conditions
$\Phiad|_{\Gamma_+} = 0$.
Further, according to Theorem~\ref{thm:Fdiff}, the one-sided directional derivative of
$\Fo$ is  given by  $\Fo' (\mu, \sigma)(h_\mu, h_\sigma) = \Psi$, where
$\Psi$ is the unique solution of  $\kl{ \theta \cdot \nabla_x
+ (\mu + \sigma - \sigma\Ko} ) \Psi =
- (h_\mu + h_\sigma - h_\sigma\Ko ) \Phi$ with inflow boundary conditions
$\Psi|_{\Gamma_-} = 0$. The zero outflow and zero  inflow boundary conditions of $\Phiad$
and $\Psi$, respectively, and one integration
by parts, show $\langle - \theta \cdot  \nabla_x \Phiad,  \Psi \rangle_{L^2(\Om \times \sph^{d-1})}=\langle \Phiad, \theta \cdot  \nabla_x \Psi \rangle_{L^2(\Om \times \sph^{d-1})}$.
Further, notice that the averaging operator $\Ao$ has adjoint $\Ao^* \colon   L^2(\Om)\rightarrow L^2(\Om \times \mathbb{S}^{d-1})$, $(\Ao^* g)(x,v) = g(x)$, and that  the scattering  operator $\Ko$ is self-adjoint.

Using these considerations, the second term  in \eqref{eq:term12} can be written
as
\begin{multline*}
 \inner{ \Ao^\ast\mu\Wo_{\Om,\La}^* \left[\Wo_{\Om,\La} \mu\Ao \Phi-v\right]}{\Fo'(\mu,\sigma)(\mudel,\sigdel)}_{L^2(\Om\times\mathbb{S}^{d-1})}\\
\begin{aligned}
&=
 \inner{
 \kl{ -\theta \cdot \nabla_x + \kl{\mu + \sigma - \sigma\Ko} } \Phiad}
 {\Psi}_{L^2(\Om\times\mathbb{S}^{d-1})}
 \\&=
 \inner{\Phiad}
 {\kl{ \theta \cdot \nabla_x + \kl{\mu + \sigma - \sigma\Ko} } \Psi}_{L^2(\Om\times\mathbb{S}^{d-1})}
 \\&=
 \inner{\Phiad}
 {-\left(\mudel+\sigdel-\sigdel \Ko\right)\Phi}_{L^2(\Om\times\mathbb{S}^{d-1})}
 \\&=
 -\inner{\Phi\Phiad}
 {\mudel+\sigdel}_{L^2(\Om\times\mathbb{S}^{d-1})}
 +\inner{(\Ko\Phi) \Phiad}
 {\sigdel}_{L^2(\Om\times\mathbb{S}^{d-1})}
 \\&=
   -\inner{\Phi\Phiad}
   {\Ao^\ast(\mudel)+\sigdel}_{L^2(\Om\times\mathbb{S}^{d-1})}
   +\inner{(\Ko\Phi) \Phiad}
   { \sigdel}_{L^2(\Om\times\mathbb{S}^{d-1})}
  \\&=
   \inner{ -\Ao(\Phi\Phiad)}
    {\mudel}_{L^2(\Om)}
    + \inner{-\Phi\Phiad + (\Ko \Phi) \Phiad}
    {\sigdel}_{L^2(\Om\times\mathbb{S}^{d-1})}
   \,.
 \end{aligned}
 \end{multline*}
Together with \eqref{eq:term12} this yields the desired identity
\eqref{eq:gradient}.
\end{proof}

Let  $(\mu, \sigma) \in \dom_2$ be an admissible pair of absorption and scattering coefficient.
If  $\mu, \sigma$ are both strictly positive, then one concludes from Proposition~\ref{prop:gradient} that
the gradient  of  $\ffun$ at $(\mu, \sigma)$ is uniquely defined and given by $\nabla \ffun(\mu,\sigma) = (\nabla_\mu \ffun(\mu,\sigma),  \nabla_\sigma \ffun(\mu,\sigma)) $ with
\begin{align} \label{eq:gradient2}
\nabla_\mu \ffun(\mu,\sigma)
&=
 \sum_{i=1}^N
 \kl{ \Ao\Phi_i \Wo_{\Om,\La}^* \left[\Wo_{\Om,\La} \mu \Ao\Phi_i - v_i\right]- \Ao(\Phi_i\Phiad_i)}
\\ \label{eq:gradient3}
\nabla_\sigma \ffun(\mu,\sigma)
&=
 \sum_{i=1}^N
 \kl{- \Phi_i \Phiad_i + (\Ko\Phi_i) \Phiad_i} \,.
\end{align}
Here $\Phi_i := \Fo_i (\mu, \sigma)$,  and  $\Phiad_i$ is  the solution of the adjoint equation
\eqref{eq:phidag} with zero outflow boundary condition.
In the case that $\mu, \sigma$ are not both strictly positive, the gradient is not uniquely defined by \eqref{eq:grad}. However, Proposition~\ref{prop:gradient} implies that the vector
$\nabla \ffun(\mu,\sigma)$ defined by \eqref{eq:gradient2}, \eqref{eq:gradient3} still satisfies \eqref{eq:grad}. We therefore take \eqref{eq:gradient2}, \eqref{eq:gradient3} as gradient
of $\ffun$ at any $(\mu, \sigma) \in \dom_2$.

\subsection{Proximal gradient algorithm for single-stage  qPAT}
\label{sec:min}

In order to  minimize the Tikhonov functional we apply the  proximal gradient (or forward backward splitting)  algorithm, which is an iterative algorithm for minimizing  functionals that can be written as a sum $\ffun+\gfun$, where $\ffun$ is smooth and  $\gfun$ is  convex \cite{ComWaj05,CombPes11}.
The proximal gradient algorithm  computes a sequence of iterates by alternating application of explicit gradient steps for the first functional $\ffun$ and implicit  proximal steps for  the second functional $\gfun$.

To apply the proximal gradient algorithm for minimizing the Tikhonov functional \eqref{eq:tik}  we take
$ \ffun(\mu, \sigma) = \frac{1}{2}\snorm{\Fqpat( \mu, \sigma) - v}_N^2$ for the first and
$\gfun(\mu, \sigma)  = \lambda \rfun ( \mu, \sigma)$ for the second functional.
The proximal gradient algorithm then generates a sequence $\skl{\mu_n, \sigma_n}$ of iterates
defined by
\begin{equation} \label{eq:proxgrad}
\skl{\mu_{n+1}, \sigma_{n+1}}
\coloneqq
\prox_{s_n\lambda\rfun}  \kl{ \kl{\mu_n, \sigma_n}
-  s_n  \nabla \ffun \kl{\mu_n, \sigma_n}} \quad \text{ for } n \in \N  \,.
\end{equation}
Here $\skl{\mu_0, \sigma_0} \in \dom_2 \cap \dom (\rfun)$  is an initial  guess,
$s_n>0$ is the step size in the $n$-th iteration,
 $\nabla \ffun(\mu,\sigma) = (\nabla_\mu \ffun(\mu,\sigma),  \nabla_\sigma \ffun(\mu,\sigma)) $
 is the gradient of $\ffun$ given by \eqref{eq:gradient2}, \eqref{eq:gradient3},
 and
\begin{equation} \label{eq:prox}
\prox_{s_n\lambda\rfun} \kl{\hat \mu, \hat \sigma }
\coloneqq
 \argmin_{( \sigma, \mu ) \in \dom_2 \cap \dom (\rfun)}
 \frac{1}{2} \norm{\kl{\mu, \sigma} - \kl{\hat \mu, \hat \sigma}}^2
+ s_n \lambda\rfun\skl{\mu, \sigma}
\end{equation}
is the proximity operator corresponding to the functional $s_n\lambda\rfun\skl{\mu, \sigma}$.

\begin{remark}[Lipschitz continuity of $\nabla \ffun$]
Note that the gradient $\nabla \ffun$ of the data-fidelity term is easily shown to be Lipschitz continuous. This either can be deduced from  the explicit expressions \eqref{eq:gradient2}, \eqref{eq:gradient3}
or by using $\nabla \ffun(\mu,\sigma) = \Fqpat' (\mu, \sigma)^* (\Fqpat \kl{\mu, \sigma} - v )$. In any case, the Lipschitz continuity of $(\mu,\sigma) \mapsto \nabla \ffun(\mu,\sigma)$
follows from similar arguments as in the proofs of the Lipschitz-continuity of $\Fo$ and $\Ho$,
given  in Section \ref{sec:F} and Section \ref{sec:H},  respectively.
\end{remark}

Convergence of the proximal gradient algorithm  \eqref{eq:proxgrad} is
well known for the case that $\ffun$ is convex with $\beta$-Lipschitz
continuous gradient and step sizes
satisfying $s_n\in [\eps, 2/\beta-\eps]$ for some constant $\eps >0$,
see \cite{ComWaj05,CombPes11}. These results are also valid for infinite dimensional
Hilbert spaces.  Because our forward operator $\Fqpat$ is nonlinear, the data-fidelity term $\ffun$
is non-convex and these results are not directly applicable to qPAT. Note however 
Recently, the converge analysis of the proximal gradient analysis has been extended to the case of
non-convex functionals; see  \cite{attouch2013convergence,BolSabTeb14,ChoPesRep14}.

\section{Numerical implementation}
\label{sec:num}

Our numerical simulations are carried out in $d=2$ spatial dimensions.
The  stationary RTE is solved on a  square domain $\Om={\blau [-1,1]^2}$.
For the scattering  kernel we choose the two dimensional version of the Henyey-Greenstein kernel,
\begin{equation*}
	k(\theta,\theta')
	\coloneqq
	\frac{1}{2\pi}\frac{1-g^2}{1+g^2-2g\cos(\theta\cdot\theta')}
	\quad \text{ for } \theta, \theta' \in \sph^1 \,,
\end{equation*}
where $g \in (0,1)$ is the  anisotropy factor.
Before we present results of our numerical simulations we first outline how we
numerically solve  the stationary RTE in two spatial dimensions. This step 
 is required for evaluating both, the forward operator $\Fqpat$ and the
 gradient $\nabla \ffun$ of the data-fidelity term.

\subsection{Numerical solution of the RTE}
\label{sec:FE}

For the numerical solution of the stationary RTE \eqref{eq:srt}, \eqref{eq:srt-b} we employ a finite element method.
For that purpose one calculates the weak form of equation \eqref{eq:srt}, \eqref{eq:srt-b} by integrating the equation against a test function $w \colon \Om\times \mathbb{S}^1 \to \R$. Integrating by parts in the transport term yields
\begin{equation}\label{eq:weak}
\int_{\Om}
\int_{\mathbb{S}^1}
\kl{-\theta\cdot\nabla_x w + \mu w+ \sigma w- \sigma  \Ko w}
  \Phi  \, \rmd \theta \, \rmd x
+\int_{\partial \Om \times \sph^1} \Phi w \kl{\theta\cdot \nu}
\rmd\sigma
=
\int_{\Om}
\int_{\mathbb{S}^1}
q w  \, \rmd \theta \, \rmd x  \,.
\end{equation}
Here we dropped all dependencies on the variables to shorten notation and $\rmd \sigma$ denotes the usual surface measure on  $\partial \Om \times \sph^1$.

The numerical scheme replaces the  exact solution
by a linear combination in the finite element space
\begin{equation} \label{eq:fe}
	\Phi^{(h)} (x,\theta)
	=
	\sum_{i =1}^{N_h} c_i^{(h)}  \psi_i^{(h)} (x,\theta) \,,
\end{equation}
where any $\psi_i^{(h)}(x,\theta)$ is the product of a basis function in space and a basis function in velocity and the sum ranges over all possible combinations.
The spatial domain is triangulated uniformly with mesh size $h$ as illustrated in Figure~\ref{fig:triangulation}.
The velocity direction on the circle is divided into $16$ equal subintervals.
We use $P_1$ Lagrangian elements, i.e. piecewise affine functions, in the two dimensional spatial domain as well as for the angle.

\psfrag{a}{$x_1$}
\psfrag{b}{$x_{ N+1}$}
\psfrag{c}{$x_{( N+1)^2}$ }
\begin{figure}[htb!]
\floatbox[{\capbeside\thisfloatsetup{capbesideposition={right,bottom},capbesidewidth=0.5\textwidth}}]{figure}[\FBwidth]
{\caption{\textsc{Spatial finite element discretization.} The square domain $\Om = {\blau [-1,1]^2}$ is divided  into $2 N^2$ triangles. To any of the  $(N+1)^2$ grid points $x_1, \dots, x_{(N+1)^2}$ a piecewise affine basis function is associated, that takes the value one at one  grid point and the value zero on all other grid points, and is affine on every triangle.}\label{fig:triangulation}}
{ \includegraphics[width=0.25\textwidth]{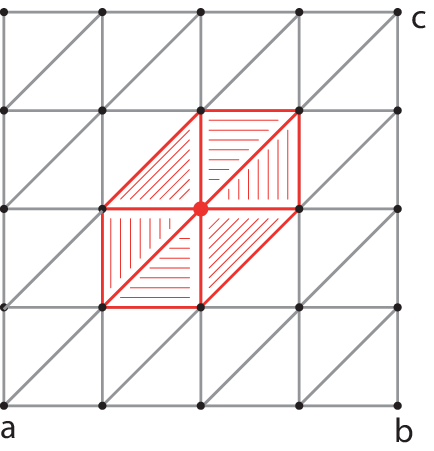}\hspace{0.15\textwidth}}
\end{figure}

To increase stability in low scattering areas we add some artificial diffusion in the transport direction.
This is called the streamline diffusion method, see for example \cite{Kan10} and the references therein. In the streamline diffusion method the  solution $\Phi$ is approximated in the usual way by  \eqref{eq:fe}. However, the
test functions take the form
\begin{equation} \label{eq:fe-sd}
	w(x,\theta) = \sum_{j =1}^{N_h} w_j\skl{\psi_j(x,\theta) +
	\delta(x,\theta)\, \theta\cdot
	\nabla_x\psi_j(x,\theta)} \,,
\end{equation}
where  the additional term introduces some artificial
diffusion.
In our experiments, the stabilization parameter is taken as
$\delta(x,\theta) = 3 h/100$ for  $\mu(x)+\sigma(x,\theta) + <1$
and zero otherwise. Note that the streamline diffusion method provides  a
fully consistent stabilization of the original problem.

Making the ansatz  \eqref{eq:fe} for the numerical solution and using  test functions of the form \eqref{eq:fe-sd}, equation \eqref{eq:weak} yields a system of linear equations $M^{(h)} c^{(h)} = b^{(h)}$ for the coefficient vector of the numerical solution.
The entries of  $M^{(h)}$ and   $b^{(h)}$ can be calculated by setting $\Phi = \psi_i$ and $w = \psi_j  + \delta \theta\cdot \nabla_x\psi_j $.  For simplicity  we only consider the case $q=0$ corresponding to zero sources of internal illumination. Then, similar  to \eqref{eq:weak} we obtain
\begin{multline}\label{eq:weak-sd}
 \int_{\Om}\int_{\mathbb{S}^1} \left(\delta \theta\cdot\nabla_x\psi_i-\psi_i\right)\theta\cdot\nabla_x\psi_j\rmd \theta\rmd x+\int_{\Gamma_+ }
 \abs{\theta\cdot \nu}\psi_i\psi_j\rmd\sigma \\
\int_{\Om}\int_{\mathbb{S}^1}(\mu+\sigma -  \sigma \Ko )\left(\psi_j+\delta\theta\cdot\nabla_x\psi_j\right)\psi_i\rmd \theta\rmd x
= \int_{\Gamma_-}
 \abs{\theta\cdot \nu} \psi_i\psi_j\rmd\sigma \,.
\end{multline}
The  entries of  the matrix $M^{(h)}$ now can be calculated by evaluating the
integrals on the left hand side of \eqref{eq:weak-sd}.
The right hand side of \eqref{eq:weak-sd} together with the prescribed boundary light sources on
$\Gamma_-$  yields the entries of $b^{(h)}$.

  \begin{figure}[htb!]
\begin{center}
  \includegraphics[width=0.3\textwidth]{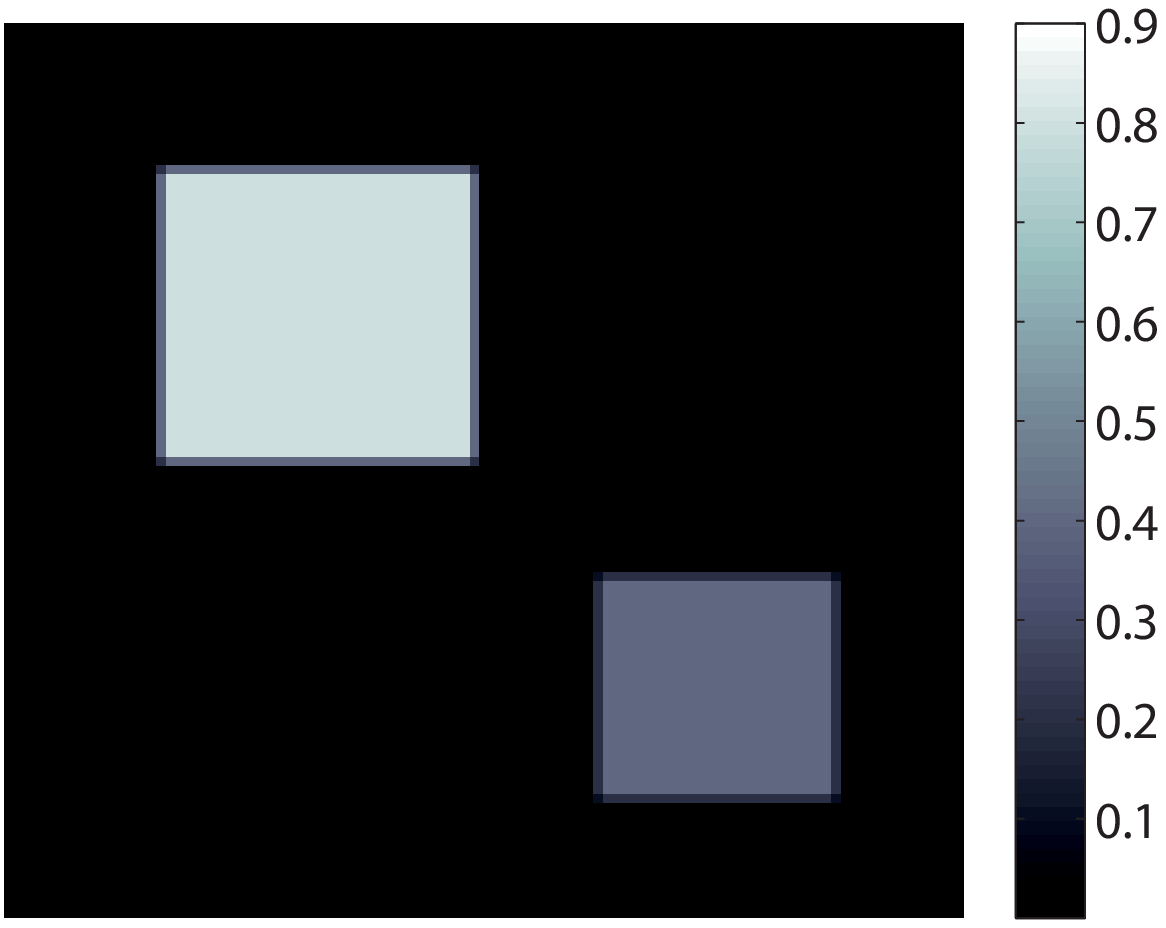}\quad
  \includegraphics[width=0.3\textwidth]{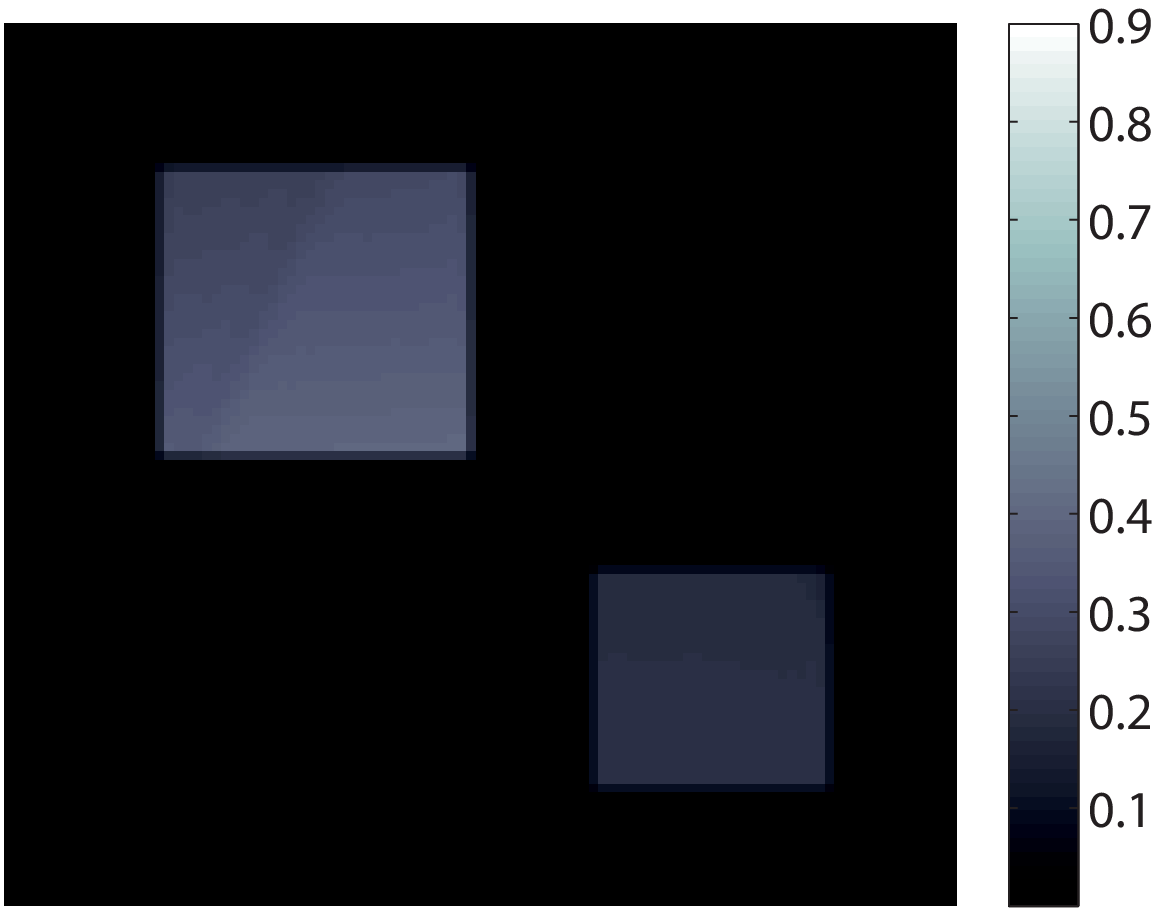}
  \quad
  \includegraphics[width=0.32\textwidth]{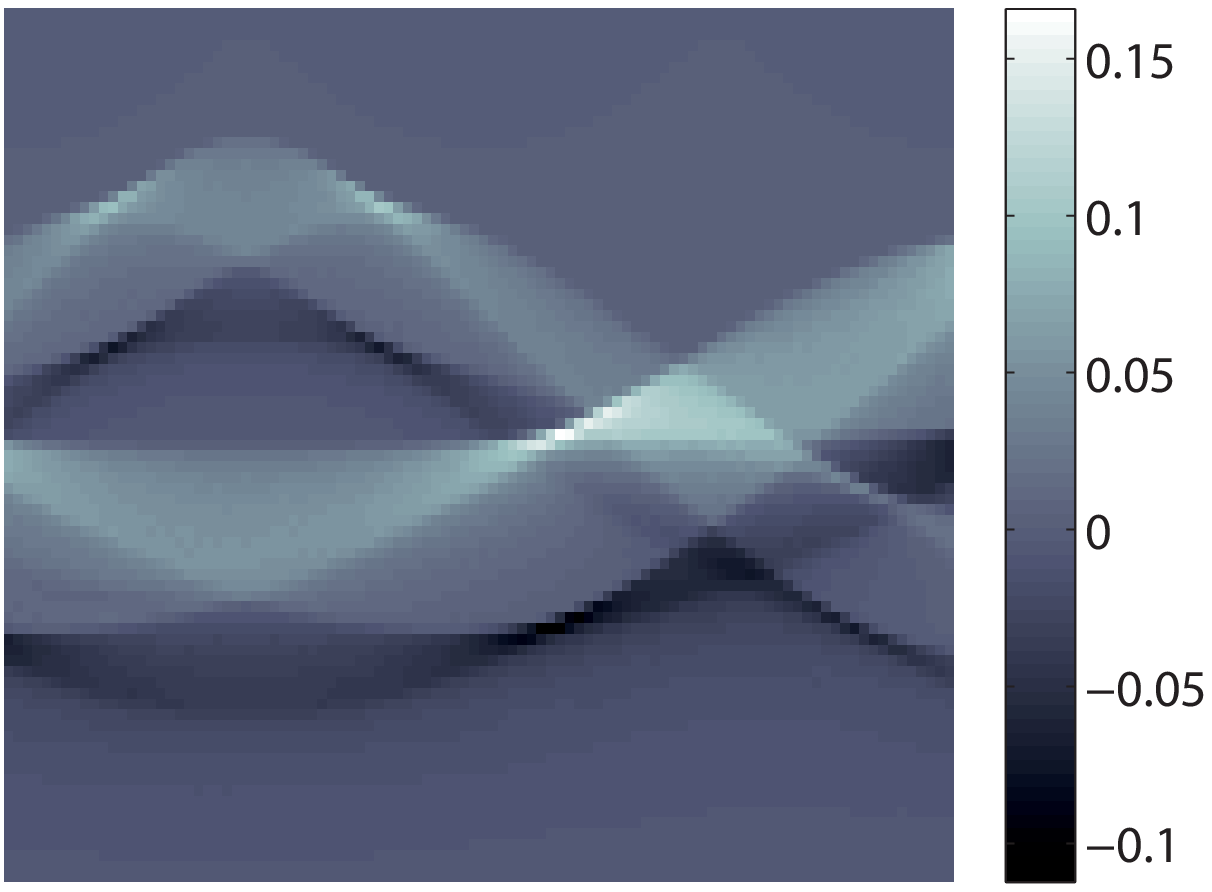}
\end{center}
\caption{\blau \textsc{Original phantom and simulated data.} True absorption coefficient (left),
simulated heating function (middle), and  simulated  pressure data on the boundary (right; the detector location varies in the horizontal and time in the vertical direction).}\label{fig:sim1}
\end{figure}

\subsection{Numerical  results}

For the  following  numerical results, the stationary RTE is solved by the finite element method outlined in  Subsection~\ref{sec:FE}.
For that purpose  the domain $\Om = {\blau [-1,1]^2}$ is discretized  by a mesh {\blau of}  triangular elements (compare Figure~\ref{fig:triangulation}).
The angular domain is divided into $16$ subintervals of equal length.
The  anisotropy factor is taken as $g = 0.6$ and the scattering coefficient is taken as  $\sigma = 3$.
We use a single {\blau boundary source distribution $f$} representing a planar illumination along the  lower edge ${\blau [-1,1] \times \set{-1}}$, {\blau where all photons enter the domain vertically}.
{\blau For solving the inverse problem we used  a mesh containing  $7442$ triangular elements. In order to avoid performing inverse crime, for simulating the data we used a finer mesh containing  $20402$ triangular elements.}

The solution of the two-dimensional wave equation  \eqref{eq:wave-ini}
is computed by numerically evaluating the solution formula \eqref{eq:sm}, where
the detection curve $\La =  \set{3/2 (\cos \varphi, \sin \varphi) : \varphi \in (-\pi, 0)} $ is a half-circle  on the boundary of $B_{3/2}(0)$. The adjoint $\Wo_{\Om,\La}^* h$ is evaluated by numerically implementing  \eqref{eq:wave-ad}.
This can be  done efficiently by a filtered backprojection algorithm as described in \cite{BurBauGruHalPal07,FinHalRak07}.
The geometry of $\Om$ and $\La$ is similar to the one illustrated in Figure~\ref{fig:geometry}.

   \begin{figure}[htb!]
\begin{center}
  \includegraphics[width=0.3\textwidth]{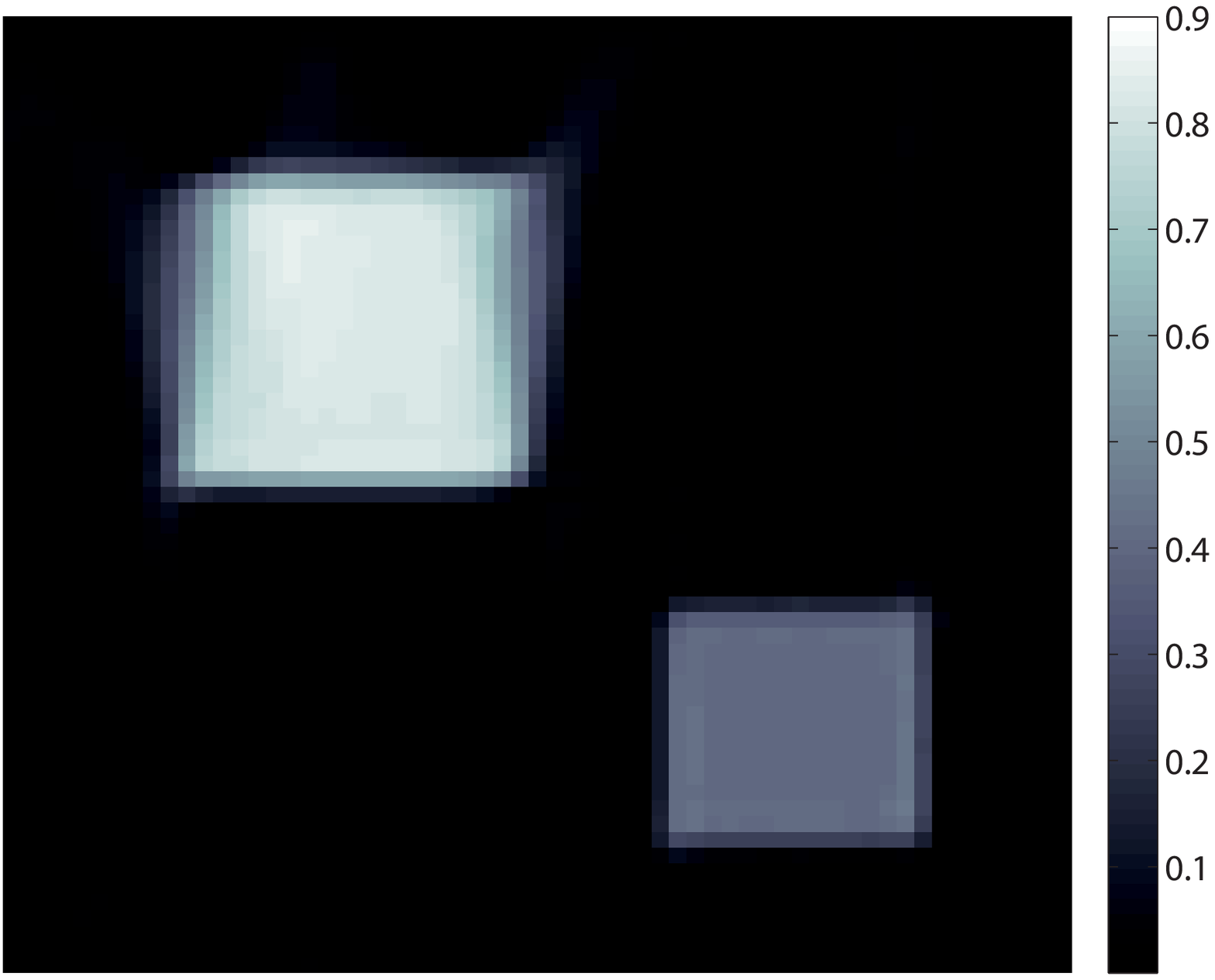}
  \quad
  \includegraphics[width=0.3\textwidth]{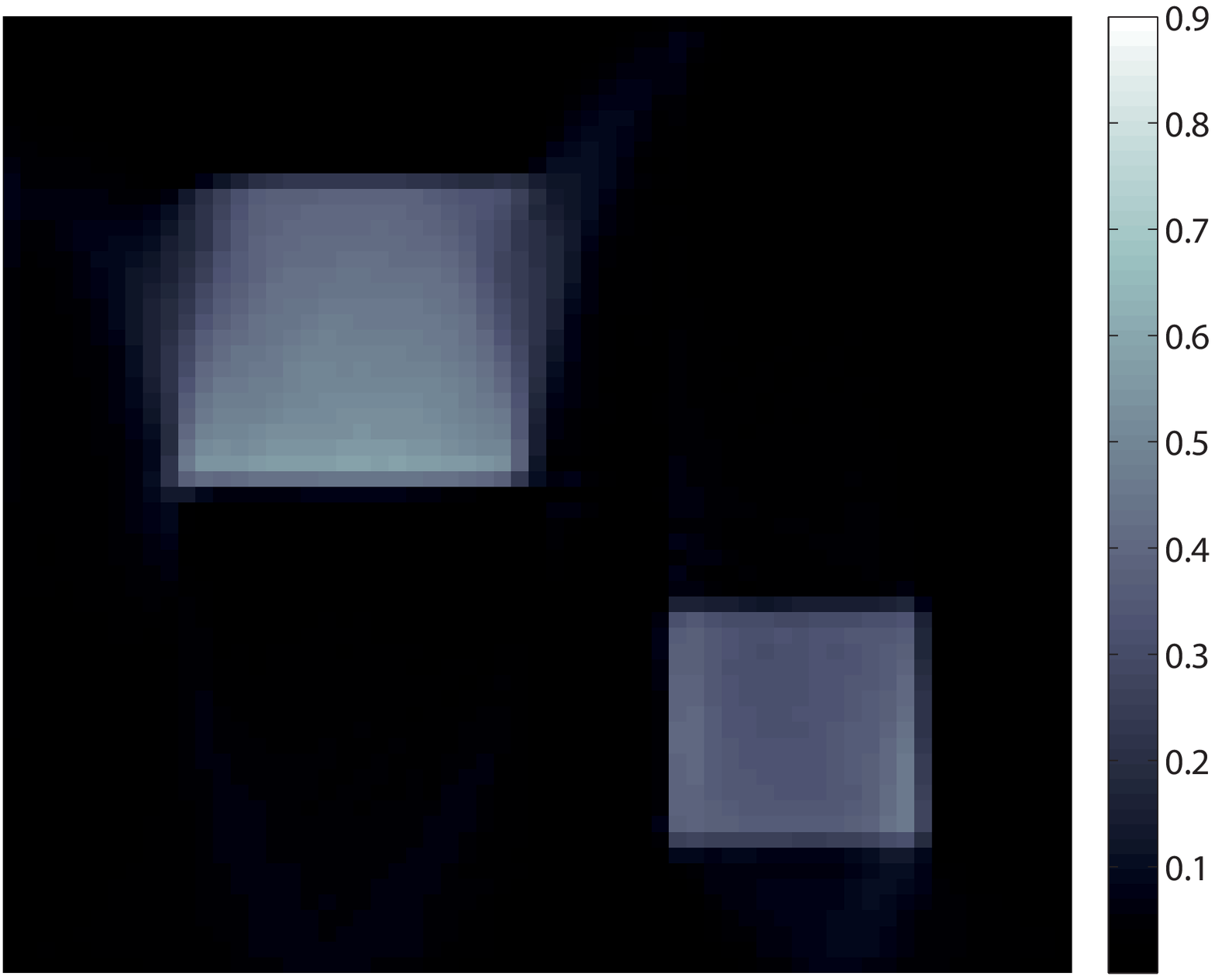}
\end{center}
\caption{\textsc{Reconstruction  results for simulated data.}
{\blau Reconstructed absorption coefficient using our single-stage approach (left),}  and  reconstructed absorption coefficient using the usual two-stage approach (right).}\label{fig:sim}
\end{figure}

For our initial experiments we assume  the  scattering coefficient $\sigma$ to be known. In such a situation, the  proximal gradient algorithm  outlined in the Subsection~\ref{sec:min} reads
 \begin{equation*}
\mu_{n+1}
\coloneqq
\prox_{s_n\lambda\rfun}  \kl{ \mu_n
-  s_n  \nabla_\mu \ffun \kl{\mu_n, \sigma}} \quad \text{ for } n \in \N
\end{equation*}
where $s_n>0$ is the step size,
$\nabla_\mu \ffun(\mu,\sigma)$
 is the gradient of $\ffun$ in the first component, given by \eqref{eq:gradient2},
 and $\prox_{s_n\lambda\rfun} \kl{\,\cdot\, }$
is the proximity operator similar as in \eqref{eq:prox}.
In the presented numerical examples the regularization term is taken as a quadratic functional $\rfun(\mu) = \tfrac{1}{2} \norm{\partial_x \mu}_{L^2(\Om)}^2 + \tfrac{1}{2} \norm{\partial_y \mu}_{L^2(\Om)}^2$. In order to speed  up the iterative scheme we compute the proximity operator only approximately by projecting the unconstrained minimizer
 $\argmin_{\mu} \tfrac{1}{2} \snorm{\mu- \hat\mu}^2 + s_n \lambda\rfun\skl{\mu}$ on $\dom_2$. Therefore the  main numerical cost in the proximal step is the solution of a linear equation, which is  relatively cheap compared to the evaluation of  the gradient $\nabla_\mu \ffun$.

In {\blau Figures~\ref{fig:sim1} and~\ref{fig:sim}} we present results of our numerical {\blau experiments}.
{\blau Figure~\ref{fig:sim1} shows the true
absorption coefficient as well as the heating function and the simulated pressure data.  The left image in Figure~\ref{fig:sim} shows the}
numerical reconstruction with the proposed single-stage
approach using 40 iterations of the proximal gradient algorithm.
{\blau We observed empirically, that the proximal gradient algorithm
stagnated after 20 to 40 iterations  and therefore we used 40 as a stopping index.}
For comparison purpose, the right image in Figure~\ref{fig:sim}
shows reconstruction results using the classical two-stage approach.
For that purpose we apply Tikhonov regularization and the proximal gradient  algorithm to the inverse problem
$h =  \Ho_i \skl{\mu} + z_h $. {\blau Again the iteration has been stopped after 40 iteration, where the iteration has been stagnated.}
The approximate heating $h$ is computed numerically by applying the two dimensional universal backprojection formula
 \cite{BurBauGruHalPal07,Kun07a,Hal13a}
 to the acoustic data $v = \Wo_{\Om, \La} \circ \Ho_i \skl{\mu} + z$.  All computations have been performed in  \textsc{Matlab}
 on a notebook with $\unit[2.3]{GHz}$
 Intel Core i7 processor. The total computation times have been 26 minutes for the  two-stage approach and 38 minutes for the single-stage approach.

One notices that in both reconstructions some  boundaries in the upper half are blurred. Such artifacts are  expected and arise from the ill-posedness of the acoustical problem when using limited-angle data; see \cite{FriQui14,paltaufnhb07,XuWanAmbKuc04}.
However  these artifacts are less severe for the single-stage algorithm than for
the classical two-stage algorithm. Further, in this example, the single-stage algorithm also yields a better quantitative estimation of the values of the absorption coefficient.

\begin{figure}[htb!]
\begin{center}
  \includegraphics[width=0.31\textwidth]{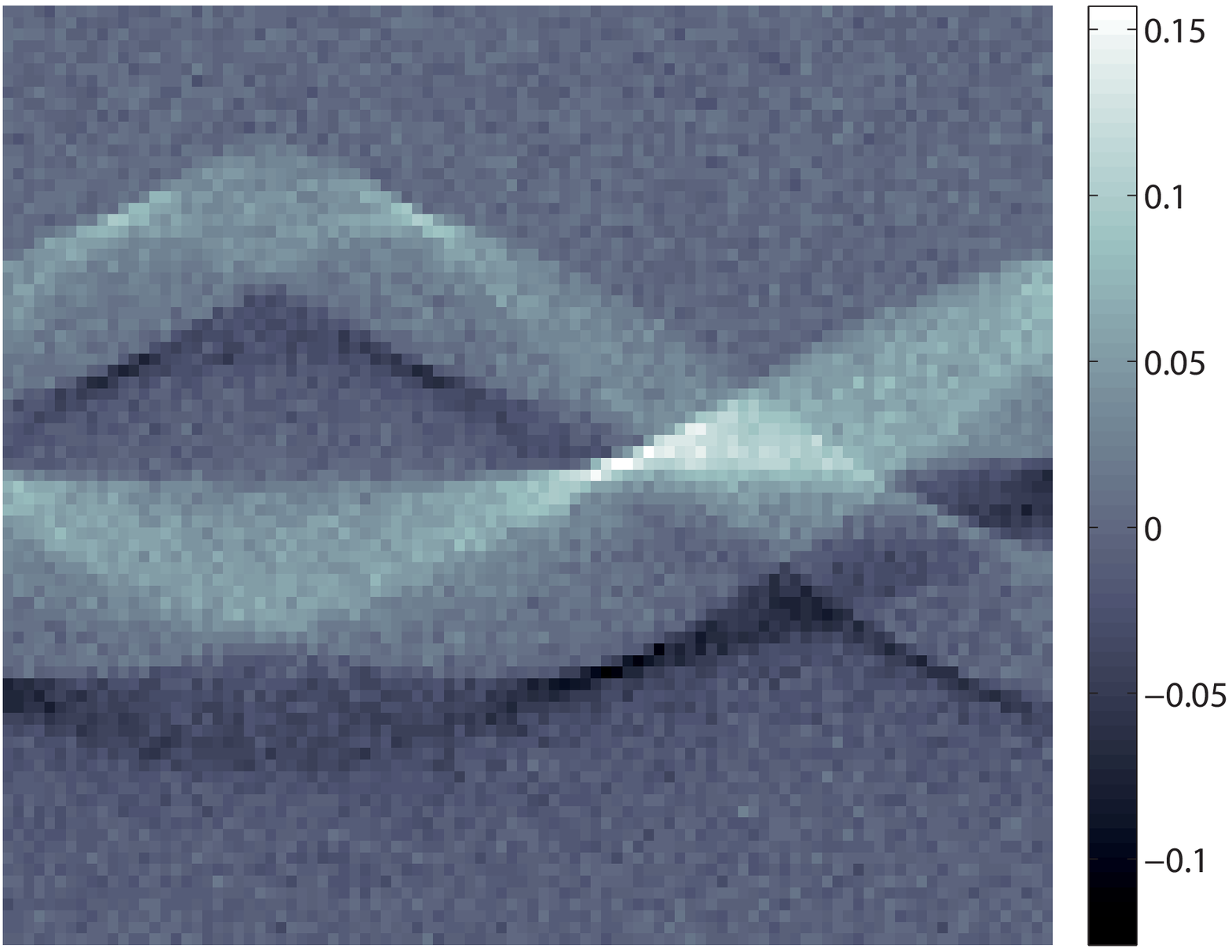}\quad
  \includegraphics[width=0.3\textwidth]{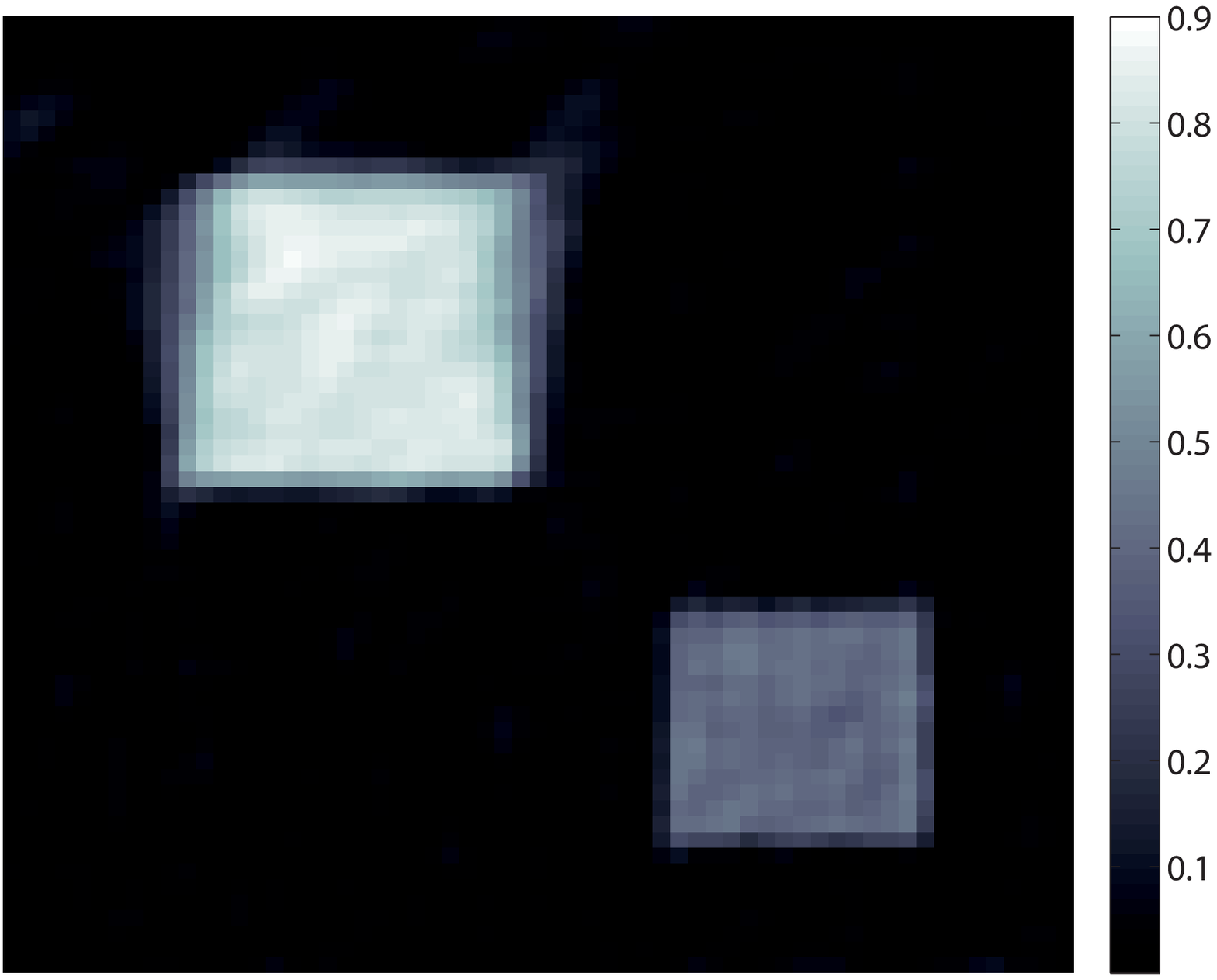}
  \quad
  \includegraphics[width=0.3\textwidth]{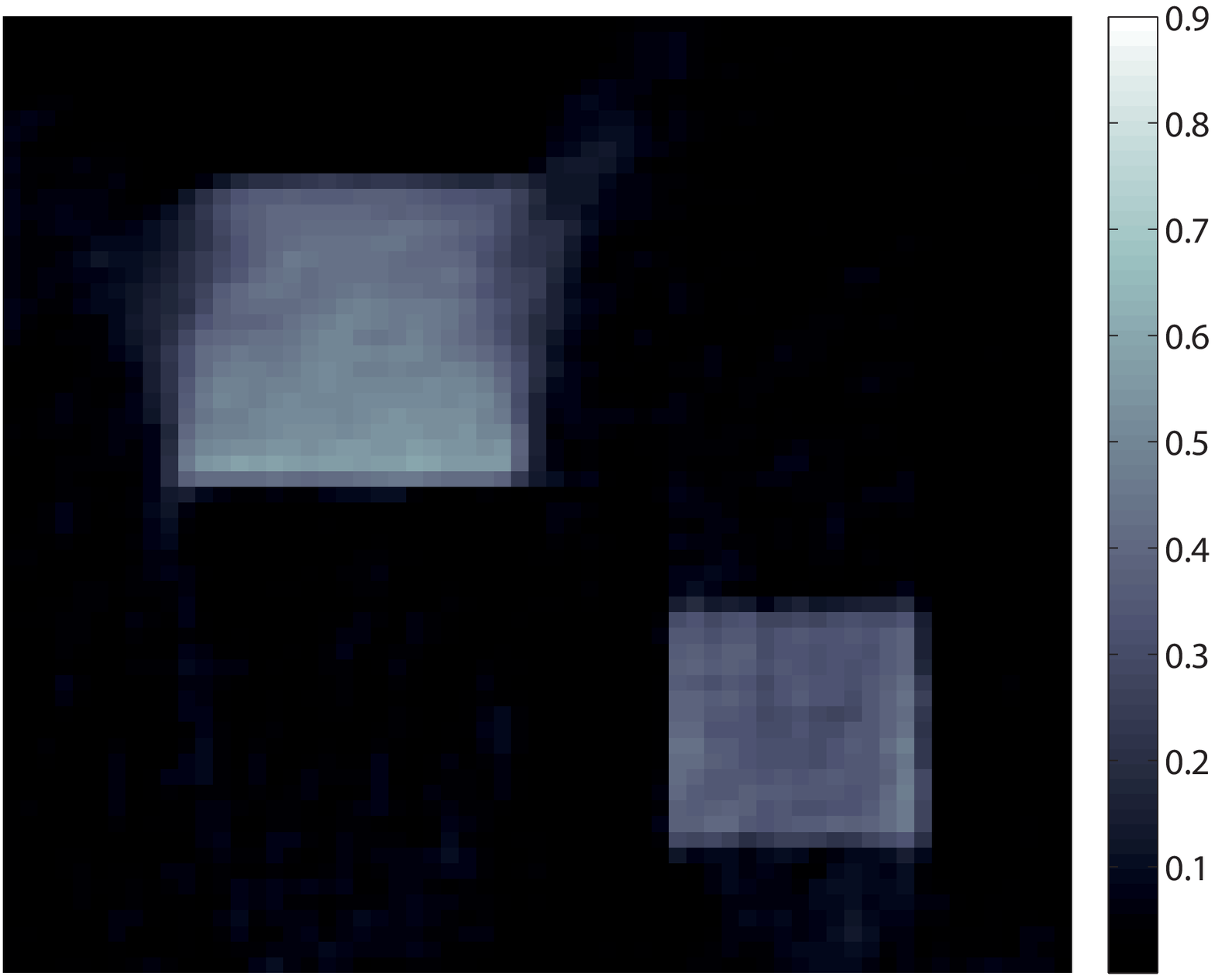}
\end{center}
\caption{\textsc{Reconstruction  results for noisy pressure data.}
Pressure data with $5\%$  added noise (left; {\blau detector location varies
in the horizontal and time in the vertical direction}), reconstructed absorption coefficient using our single-stage approach (middle)  the classical two-stage approach (right).}\label{fig:sim-noisy}
\end{figure}

Finally, in order to investigate the stability of the derived algorithms  with respect to noise, we applied
the single-stage and the two-stage algorithm  after adding Gaussian white noise to the data
with standard deviation equal to $5\%$ of the maximal absolute data values. 
{\blau Note that for both, the single-stage and the two-stage algorithm, noise has only been added
to the acoustic data.} 
The reconstruction results for noisy data are shown in Figure~\ref{fig:sim-noisy}.
As can be seen both algorithms are quite stable with respect to data perturbations. However, again, the single-stage approach yields better results and less artifacts than the two-stage algorithm.

\section{Conclusion}
\label{sec:conclusion}

In this paper we proposed a single-stage approach for quantitative PAT. For that purpose we derive algorithms that directly recover the optical parameters from the measured acoustical data. This is in contrast to the usual two-stage approach, where the absorbed energy distribution is estimated in a first step, and the optical parameters are reconstructed from the
estimated energy distribution in a second step.
Our single-stage algorithm is based on generalized Tikhonov regularization and minimization of the Tikhonov functional by the proximal gradient algorithm. In order to show that Tikhonov regularization is well-posed and convergent we analyzed the stationary radiative transfer equation \eqref{eq:srt}, \eqref{eq:srt-b} in a functional analytic framework.
For that purpose we used recent results of  \cite{EggSch14} that guarantees the well-posedness even in the case of voids.

We presented results of our initial numerical studies using a simple limited angle scenario, where the scattering coefficient is assumed to be known. In this situation our  single-stage algorithms has led to less artifacts than the two-stage procedure. More detailed numerical studies will be presented in future work. In that context, we will also investigate the use of multiple illuminations and multiple wavelength, which allows to also reconstruct the in general unknown scattering coefficient and Gr\"uneissen parameter.
We further plan to investigate the use of  more general regularization functionals such as the total variation in combination with the single-stage approach.

\section*{Acknowledgements}
This work has been supported  by the Tyrolean Science Fund (Tiroler Wissenschaftsfonds), project number 153722.
{\blau We want to thank the referees for useful comments which helped to improve our  manuscript,
and for bringing  reference \cite{Song:14} to our attention.}

{
\small

}

\end{document}